\documentclass[11pt]{amsart}

\usepackage{amsmath, amssymb}
\usepackage{amsfonts}
\usepackage{amsthm}
\usepackage{amsopn}
\usepackage{amscd}

\usepackage [all]{xy}

\newcommand{\sO}{\mathcal{O}}
\newcommand{\sC}{\mathcal{C}}
\newcommand{\cinf}{\sC^\infty}
\newcommand{\sj}{\mathcal{J}}
\newcommand{\sT}{\mathcal{T}}

\newcommand{\sD}{\mathcal{D}}

\newcommand{\sX}{\mathcal{X}}
\newcommand{\sman}{\mathbf{SM}}
\newcommand{\alg}{\mathbf{Alg}}
\newcommand{\sm}{\sman}
\newcommand{\bR}{\mathbf{R}}

\newcommand{\bZ}{\mathbf{Z}}
\newcommand{\A}{\mathbb{A}}

\newcommand{\sh}{\sharp}
\newcommand{\ruu}{\bR^{1|1}}
\newcommand{\rou}{\bR^{0|1}}
\newcommand{\rpq}{\bR^{p|q}}
\newcommand{\tensor}{\otimes}
\newcommand{\comp}{\circ}

\newcommand{\iso}{\cong}
\newcommand{\tic}{\tilde{c}}

\theoremstyle{plain}
\newtheorem{thm}{Theorem}[section]
\newtheorem{prop}[thm]{Proposition}

\newtheorem{cor}[thm]{Corollary}

\newtheorem{lem}[thm]{Lemma}

\newtheorem{rem}[thm]{Remark}

\theoremstyle{definition}

\theoremstyle{remark}

\let\sym Q
\let\str D
\let\sh\sharp
\let\over\stackrel
\let\a\alpha

\let\g\gamma

\let\e\varepsilon

\let\th\theta

\let\l\lambda
\let\m\mu
\let\n\nu

\let\f\varphi
\let\o\omega

\let\G\Gamma
\let\D\Delta

\let\La\Lambda

\let\O\Omega

\let\na\nabla
\let\ra\rightarrow
\let\lra\longrightarrow
\let\ti\tilde

\newcommand{\vq}{\partial_\th-\th\partial_t}
\newcommand{\vd}{\partial_\th+\th\partial_t}

\begin{document}
\title{Superconnections and Parallel Transport}
\author{Florin Dumitrescu}
\date{\today}
\maketitle

\begin{abstract}
This note addresses the construction of a notion of parallel
transport along superpaths arising from the concept of a
superconnection on a vector bundle over a manifold $M$. A
superpath in $M$ is, loosely speaking, a path in $M$ together with
an odd vector field in $M$ along the path. We also develop a
notion of parallel transport associated with a connection (a.k.a.
covariant derivative) on a vector bundle over a
\emph{supermanifold} which is a direct generalization of the
classical notion of parallel transport for connections over
manifolds.
\end{abstract}

\section{Introduction}

The problem of understanding geometrically superconnections arose
in an attempt to give examples of supersymmetric $1|1$- field
theories $\grave{a}$ la Stolz-Teichner (see \cite{ST}) over a
manifold. Such field theories are expected to constitute cocycles
for a differential version of topological K-theory (see \cite{HS}
and \cite{BuS}), providing the appropriate frame to formulate for
example local family versions of the celebrated index theorems.
Though these problems are related, it is not clear at this point
how to formulate this connection.

Recall the classical parallel transport of a connection on a
vector bundle over a manifold. Let $E$ be a vector bundle over a
manifold $M$, and $\na$ a connection on $E$. Given a path $\g:
[0,1]\ra M$ joining two points $x$ and $y$ in $M$, the connection
allows to identify linearly the fiber $E_x$ over $x$ with the
fiber $E_y$ over $y$, via parallel sections along the path, i.e.
sections that are constant along the path with respect to the
pull-back connection. These identifications are compatible with
gluing paths and invariant under reparametrization. This
independence on metric allows us to refer to connections as
examples of topological 1-dimensional field theories over a
manifold.

An analogous construction carries over to the category of vector
bundles over supermanifolds. The peculiar feature of a
supermanifold is that its functions can also anticommute. Locally,
a supermanifold of dimension $(p,q)$ looks like $\rpq$, the space
whose functions are smooth functions on $\bR^p$ tensored with an
exterior algebra in $q$ odd generators. A supermanifold is
represented by its ($\bZ/2$-graded) algebra of functions. For
example, given a vector bundle $E$ over a manifold, the
supermanifold $\Pi E$ has as functions the sections of the
exterior bundle $\La E^*$. In particular, the ``odd tangent
bundle" $\Pi TM$ of a manifold $M$ has as functions the sections
of the bundle $\La T^*M$, i.e. differential forms on $M$.

A connection $\na$ on a vector bundle $E$ over a supermanifold $M$
is defined by the usual Leibniz property: $\ \na(fs)= df\tensor
s\pm f\na s$, for functions $f$ on $M$, and sections $s$ of $E$.
Geometrically, such a connection gives rise to parallel transport
along superpaths $\a: \ruu \ra M$. Namely, a section along $\a$ is
parallel if it is constant in the direction of the \emph{odd}
vector field $D= \vd$ on $\ruu$. The resulting parallel transport
by parallel sections along superpaths is compatible under glueing
of superpaths and (conformal- see Section \ref{reparametrisation})
reparametrizations of superpaths.

Next, we want to interpret geometrically (as parallel transport)
superconnections. Recall (see \cite{Q} or \cite{BGV}) that a
superconnection on a $\bZ/2$-graded vector bundle $E$ over a
manifold $M$ is an odd degree first order differential operator
defined on the space $\O^*(M, E)$ of sections of the bundle
$\Lambda T^*M\tensor E$ over $M$, $ \A: \O^*(M, E)\ra \O^*(M, E)\
$ satisfying the (graded) Leibniz rule. $\A$ can be written $ \A=
\A_0+ \A_1+ \A_2+\ldots,\ $ with $\A_1$ a grading preserving
connection on $E$, and $\A_i$ for $i\neq 1$ is given by
multiplication by some form $\o_i\in\O^i(M, End\ E)$.

We begin with a notion of parallel transport associated to a
grading preserving connection $\na$ on a $\bZ/2$-graded vector
bundle $E$ over a manifold $M$ and an $End\ E$-valued form
$A\in\O^*(M, End\ E)^{odd}$ on $M$. Then $A$ can be viewed as a
section of the endomorphism bundle $End\ \pi^*E$, where $\pi: \Pi
TM \ra M$ is the map which on functions is the inclusion of
0-forms into the space of differential forms. Let $c: \ruu \ra M$
be a superpath in $M$. (We think, as it is customary, in families
of superpaths- see \cite{W3} or \cite{Fr}.) Then $c$ lifts
naturally (see Section \ref{prel}) to a superpath $\tic :\ruu \ra
\Pi TM$. A section $\psi$ along $c$ is \emph{parallel} if
\[ (c^*\na)_D \psi -(\tic^*A)\psi =0 . \]
The resulting parallel transport is compatible under gluing of
superpaths and converges (by an inverse adiabatic limit process)
to the parallel transport associated to the connection $\na$. As a
corollary to this construction we obtain a parallel transport
corresponding to a superconnection $\A= \A_1+ A$, by viewing
$\A_1$ as a graded connection and $A\in\O^*(M, End\ E)^{odd}$ as
above.

I would like to thank especially Stephan Stolz for guidance and
invaluable help, Nigel Higson for continuous encouragement and
Liviu Nicolaescu for stimulating discussions. This paper was
prepared, in part, while the author was a Postdoctoral Scholar at
Pennsylvania State University.

\section{A short introduction to supermanifolds}
We give in this section a brief introduction to the theory of
supermanifolds. The subject was introduced and developed by Leites
\cite{Le}, Bernstein, Manin \cite{Ma}. A good expository reference
is Deligne and Morgan \cite{DM}. The reader can also consult
Varadarajan \cite{Va}. We spend here some time talking about
integration of vector fields on supermanifolds, since we could not
find a detailed account on the topic in the literature.

Start with the ringed space $\rpq=(\bR ^p,\sC ^\infty_{\bR ^p}
\tensor \Lambda[\theta_1, \ldots, \theta_q])$. A \emph
{supermanifold} $M$ of dimension $p|q$ is a pair $(|M|, \sO _M)$
with $|M|$ a topological space and $\sO _M$  a sheaf on $|M|$ of
$\mathbf{Z}/2$-graded algebras that locally is isomorphic to
$\rpq$. $|M|$ is called the \emph {underlying space} of $M$ and
$\sO _M$ is the \emph {structure sheaf} of $M$. The odd functions
generate a nilpotent ideal $\sj$ of $\sO _M$ and $(|M|, \sO _M/
\sj )$ is a smooth manifold of dimension $p$, called the \emph
{reduced} manifold $M_{red}$ of $M$.

A \emph {morphism of supermanifolds} $f:M \rightarrow N$ is a pair
$f= (|f|, f^\sh)$ consisting of a continuous map $|f|:|M|
\rightarrow |N|$ and a map $f^\sharp :\sO _N \rightarrow |f|_ *
\sO_M$ of sheaves of $\bZ /2$-graded algebras. For example, there
is a canonical morphism $ i:M_{red}\hookrightarrow M$, which on
the underlying spaces is the identity and the map on sheaves is
the projection $i^\sharp: \sO_M\ra \sO_M/\sj$. A morphism $f: M
\rightarrow N$ induces a morphism between the corresponding
reduced manifolds since it preserves the nilpotent ideal sheaves.
A morphism of supermanifolds is uniquely determined by the map
induced on global sections (see \cite{Ko}, p. 208). So, instead of
a map of sheaves, we will consider just the map induced on their
global sections. Supermanifolds are examples of ringed spaces and
the category $\sman$ of supermanifolds embeds fully faithfully in
the category of ringed spaces.

An important source of examples of supermanifolds comes from
vector bundles. To any vector bundle $E$ over a manifold $M_0$ one
can associate a supermanifold $\Pi E=(M_0, \sO_{\Pi E})$ where
$\sO_{\Pi E}$ is the sheaf of sections of $\Lambda E^*$. This
defines a functor
\[ S: \mathbf {VB} \rightarrow \sman: E\mapsto
\Pi E \]
from the category of vector bundles to the category of
supermanifolds. There is also a functor $V$ going the other
direction. Namely, let $M=( M_0,\sO_M)$ be a supermanifold. Then
$(\sj / \sj ^2)^*$ is a locally free sheaf on $M_0$, where $\sj$
is the nilpotent ideal of $\sO_M$, so it determines a vector
bundle on $M_0$. We have that $V\comp S=id$ and $S\comp V=id$ on
isomorphism classes of objects. This doesn't assure an equivalence
of categories though, since $SV$ fails to be the identity on
morphisms (e.g. it maps the automorphism $(x, \th_1, \th_2)
\mapsto (x+\theta_1 \theta_2, \th_1, \th_2)$ of $\bR^{1|2}$ to
$id$). The category of supermanifolds is richer in morphisms. This
relation between the categories is analogous to the one between
graded rings (vector bundles) and filtered rings (supermanifolds).

\subsection{The ``functor of points" viewpoint} In the superworld
one cannot talk properly about points on a supermanifold unless
one refers to points on the reduced manifold. A more suitable
approach is the lingo of \emph{S-points}. Consider $M$ a
supermanifold. An $S$-point of $M$ for an arbitrary supermanifold
$S$ is a map $S\rightarrow M$ and the $S$-points of $M$ is the
\emph{set} $M(S)=\sman(S,M)$. This is the approach physicists
adopt in computations, which also resonates with our geometric
intuition. One can think of an $S$-point as a family of points of
$M$ parametrised by $S$. For example, as sets:
\[ M(\bR^{0|0})= \sman(\bR^{0|0},M)=|M|. \]
If $T\over{\a}\longrightarrow S$ is a map in $\sman$, there is a
natural map $M(S)\rightarrow M(T):m\mapsto m\comp\a$. So $M$
determines a contravariant functor:
\[ \sman^{op}\rightarrow \mathbf{Sets} : S\mapsto M(S) \]
called the \emph{functor of points} of $M$. A map $f:M\rightarrow
N$ of supermanifolds determines a natural transformation $\sman(\
\cdot\ ,M)\rightarrow \sman(\ \cdot\ ,N)$. The converse of this is
also true, and forms the content of Yoneda's lemma. This means
that to give a map $M\rightarrow N$ amounts to giving maps of sets
$M(S)\rightarrow N(S)$, natural in $S$.

One can therefore think of a supermanifold $M$ as a representable
functor $\ \sman^{op}\rightarrow \textbf{Sets}$, such a functor
determining $M$ uniquely up to isomorphism. For example, if $M, N$
are two supermanifolds, their \emph{product} $M\times N$ can be
interpreted as the supermanifold representing the functor
\[ S\mapsto \sman(S, M)\times \sman(S, N). \]
An arbitrary contravariant functor $\sman\rightarrow \mathbf{Sets}
$ will be called a \emph{generalized supermanifold}. The category
$\sman$ of supermanifolds embeds fully faithful into the category
$\mathbf{GSM}$ of generalized supermanifolds. Consider, for
example, two supermanifolds $M, N$ and define the generalized
supermanifold
\[ \underline{\sman}(M, N):\ \sman\rightarrow \mathbf{Sets}: S\longmapsto \sman(S\times M, N).\]
If $\underline{\sman}(M, N)$ is an ordinary supermanifold, then we
have the following adjunction formula
\[ \sman(S,\ \underline{\sman}(M, N)) \cong\sman(S\times M, N). \]

\subsection{The tangent sheaf and tangent vectors.} The analogue of
the tangent bundle in classical differential geometry is the
\emph{tangent sheaf} $\sT M$ defined as the sheaf of graded
derivations of $\sO_M$, i.e. for $U\subseteq |M|$
\begin{displaymath}
\sT M(U)=\{ X:\sO_M(U)\rightarrow\sO_M(U) \mbox{ linear }|
X(fg)=X(f)g+(-1)^{p(X)p(f)}fX(g)\}.
\end{displaymath}
Here $p(X)= 0$ or $1$ according to whether $X$ is even,
respectively odd vector field on $U$, and similarly $p(f)=0$ or 1,
for $f$ even, respectively odd, function on $M$. $\sT M$ is then a
locally free $\sO_M$-module of rank $(p,q)$ the dimension of the
supermanifold $M$. Sections of $\sT M$ are the \emph{vector
fields} on $M$. For $X$ and $Y$ vector fields on $M$, define as
usual their \emph{Lie bracket} $[X, Y]$ by
\[ [X, Y](f)= X(Y(f))-(-1)^{p(X)p(Y)}Y(X(f)), \text{ for }\ \
f\in\cinf(M)= \sO_M(|M|). \] For example, consider on $\ruu$ the
vector field $D=\partial_\th+\th\partial_t$. Then, one can check
that
\[ D^2=\frac{1}{2}[D, D]=\partial_t. \]
Similarly, if $Q= \partial_\th-\th\partial_t$, then
\[ Q^2=\frac{1}{2}[Q, Q]= -\partial_t. \]

For $m\in M(S)$ an $S$-point of $M$, define the tangent space at
$m$ to $M$ by
\[ TM_m= \{ v: \cinf(M)\ra \cinf(S) |\
v(fg)=v(f)m^\sharp(g)+(-1)^{p(v)p(f)}m^\sharp(f)v(g)\}. \] For
$m\in M$ an ordinary point, we get the usual definition of the
tangent space at $m$.

\subsection{Geometric structures on $(1,1)$-supermanifolds} \label{metric}
Let $Y$ be a $(1,1)$-supermanifold. Then, the tangent sheaf $\sT
Y$ is a locally free $\sO_Y$-module of rank (1,1): if $(t,\th)$
are local coordinates on $Y$ then $\{\partial_t,\ \partial_\th\}$
form a local basis for $\sT Y$. A \emph{conformal structure} on
$Y$ is a rank (0,1)-distribution $\sD$, i.e. a rank (0,1) subsheaf
of the tangent sheaf $\sT Y$, that fits into the following short
exact sequence of sheaves
\[ 0\ra\sD\longrightarrow \sT Y \longrightarrow \sD^{\tensor 2}\ra
0. \]

A \emph{euclidean (metric) structure} on $Y$ is given by the
choice of an odd vector field $D$ generating an odd distribution
$\sD$ as above. For example, on $\ruu$ consider the vector field
$D=\vd$. Then $D$ defines a metric structure on $\ruu$, called the
\emph{standard} metric structure on $\ruu$. Also $\sD= <D>$, the
distribution generated by $D$, defines a conformal structure on
$\ruu$: indeed, the square of $D$ is $D^2=\partial_t$, and the
pair $\{D,\ D^2\}$ generates $\sT \ruu$ as an $\sO_{\ruu}$-module.
For an alternative definition of metric structures see \cite{ST},
section 3.2.

\subsection{The super Lie group $\ruu$} Super Lie groups are the
super analogue of Lie groups in differential geometry. Let for
example $\ruu$ be the super Lie group with the following
multiplication map $m:\ruu\times\ruu\ra \ruu$, defined on
$S$-points by
\[ (t,\th),\ (t',\th')\over{m_S}\longmapsto (t+t'+\th\th',
\th+\th'). \] Here $t$ and $t'$ are even functions on $S$, $\th$
and $\th'$ are odd functions on $S$, and so on... Observe that
$\th\th'$ is an even function on $S$. The map $m$ defines a group
multiplication on $\ruu$, with identity given by $(0,0)\in\ruu$
and the inverse map given by $(t,\th)\mapsto (-t,-\th)$. This is
the group structure on $\ruu$ that we will mostly use in this
paper; therefore we are going to call it the \emph{standard} group
structure on $\ruu$.

As in the classical theory of Lie groups, we can consider left
(right) invariant vector fields and identify them with the tangent
space at the identity $e\in G$. Let $X$ be a vector field on a
super Lie group $G$, i.e. a graded derivation $X:\cinf(G)
\rightarrow \cinf(G)$. $X$ is \emph{left-invariant} if the
following diagram commutes:
\[
\xymatrix{ \cinf(G) \ar[r]^-{m^\sharp} \ar[d]_X
& \cinf(G)\otimes\cinf(G) \ar[d]^{1\otimes X} \\
\cinf(G) \ar[r]_-{m^\sharp} & \cinf(G)\otimes\cinf(G)}
\]
that is: $m^\sharp \circ X= (1 \otimes X)\circ m^\sharp$. The
diagram expresses the fact that $X$ is an infinitesimal right
translation.

Consider, for example, $\ruu$ with the standard group structure
defined above. Let $Q$ be the vector field on $\ruu$ given by
$Q=\partial_\th-\th\partial_t$, in coordinates $(t,\th)$ on
$\ruu$. Let us show that $Q$ is left-invariant. We need to check
that the following diagram is commutative
\[ \xymatrix{ \cinf(\ruu) \ar[r]^-{m^\sh} \ar[d]_Q &
\cinf(\ruu)\tensor\cinf(\ruu) \ar[d]^{1\tensor Q}\\
\cinf(\ruu) \ar[r]_-{m^\sh} & \cinf(\ruu)\tensor\cinf(\ruu).} \]
This is verified by looking at the following two commutative
diagrams
\[ \xymatrix{t \ar@{|->}[r]^-{m^\sharp} \ar@{|->}[d]_{Q} & t_1+t_2+\th_1\th_2 \ar@{|->}[d]^{1\tensor Q} & &
\th \ar@{|->}[r] ^-{m^\sharp} \ar@{|->}[d]_{Q}
& \th_1+\th_2 \ar@{|->}[d]^{1\tensor Q}  \\
-\th \ar@{|->}[r] _-{m^\sharp} & -\th_1-\th_2 & & 1 \ar@{|->}[r]
_-{m^\sharp} & 1 .} \]

Analogously, a vector field $X$ on a supermanifold $M$ is
\emph{right-invariant} if $m^\sharp \circ X= (X \otimes 1)\circ
m^\sharp$. One can check for example that the vector field $D=
\partial_\th+\th\partial_t$ is a right invariant vector field on
$\ruu$.

\subsection{Some identifications. } \label{ident}
\begin{lem} \label{otb} Let $M$ be an ordinary manifold. Then, we can identify
\[ \underline{\sman}(\rou, M) \cong \Pi TM, \]
where $\Pi TM$ is the odd tangent bundle of $M$.
\end{lem}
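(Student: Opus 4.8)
The plan is to construct a natural isomorphism between the functor $\underline{\sman}(\rou, M)$ and the functor of points $\sman(\ \cdot\ ,\Pi TM)$ represented by $\Pi TM$; by Yoneda's lemma this is precisely an isomorphism $\underline{\sman}(\rou, M)\iso\Pi TM$ of generalized supermanifolds. Concretely it suffices to exhibit a bijection
\[ \sman(S\times\rou, M)\ \iso\ \sman(S, \Pi TM) \]
natural in the test supermanifold $S$. Recall that $\cinf(S\times\rou)=\cinf(S)\tensor\La[\th]=\cinf(S)\oplus\cinf(S)\cdot\th$, and that, since $M$ is an ordinary manifold, $\cinf(M)$ is concentrated in even degree, so a morphism $S\times\rou\ra M$ amounts to an even algebra homomorphism $\f^\sh:\cinf(M)\ra\cinf(S\times\rou)$.

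First I would decompose such a $\f^\sh$. Writing $\f^\sh(f)=a(f)+b(f)\,\th$ with $a(f)\in\cinf(S)$ even and $b(f)\in\cinf(S)$ odd, the condition that $\f^\sh$ be an algebra homomorphism unwinds, using $\th^2=0$ and the sign rule for moving $\th$ past odd elements, into the two identities $a(fg)=a(f)a(g)$ and $b(fg)=a(f)b(g)+a(g)b(f)$. Thus $a:\cinf(M)\ra\cinf(S)$ is an ordinary algebra homomorphism --- equivalently an $S$-point $\f_0\in M(S)$ of the base --- and $b$ is an odd derivation of $\cinf(M)$ into $\cinf(S)$ along $a$. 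So the datum of a superpath $S\times\rou\ra M$ is exactly a pair $(a,b)$ of this type.

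Next I would identify $\sman(S,\Pi TM)$ with the same data. By definition the functions on $\Pi TM$ are the differential forms $\Omega^\ast(M)=\Gamma(\La T^\ast M)$, so a morphism $S\ra\Pi TM$ is an even algebra homomorphism $\Psi:\Omega^\ast(M)\ra\cinf(S)$. Its restriction to $\Omega^0(M)=\cinf(M)$ is an algebra homomorphism $a=\Psi|_{\Omega^0}$, and the composite $b=\Psi\comp d:\cinf(M)\ra\cinf(S)$ is odd-valued and, because $d(fg)=f\,dg+g\,df$ in $\Omega^\ast(M)$, obeys the same Leibniz identity $b(fg)=a(f)b(g)+a(g)b(f)$. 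Hence $\Psi\mapsto(a,b)$ lands in exactly the set of pairs produced in the previous step; the assignments are mutually inverse and evidently natural in $S$. Comparing the two paragraphs yields the desired natural bijection, and the lemma follows.

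The step I expect to demand the most care is the claim that $\Psi$ is both \emph{determined by} and can be \emph{freely reconstructed from} the pair $(a,b)$, i.e. that a graded algebra map out of $\Omega^\ast(M)$ is the same thing as an algebra map on $\Omega^0$ together with a derivation lifting it. One direction is the computation above; the converse requires checking that prescribing $a$ on $\cinf(M)$ and $b$ on exact one-forms $df$ extends consistently over all of $\Omega^\ast(M)$. This rests on the universal property of the smooth de Rham complex: $\Omega^\ast(M)$ is generated as a graded-commutative algebra by $\cinf(M)$ and the exact forms $df$ subject to the Leibniz relation, and an $a$-derivation factors uniquely through $\Omega^1(M)$. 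In the smooth category this last identification uses Hadamard's lemma (that smooth derivations are sections of the tangent bundle), so it is a genuinely geometric rather than purely algebraic input. As a cross-check I would also verify the correspondence in a coordinate chart $x^1,\dots,x^n$ on $M$, where both sides reduce to the same datum of even functions $a^i=\Psi(x^i)$ and odd functions $\beta^i=\Psi(dx^i)$ on $S$, the derivation condition being precisely the chain rule that makes these transform correctly under changes of chart.
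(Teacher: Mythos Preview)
Your proposal is correct and follows essentially the same route as the paper: decompose an algebra map $\cinf(M)\to\cinf(S)[\th]$ into a base point $a\in M(S)$ together with an odd derivation along $a$, and match this against a graded algebra map $\Omega^*(M)\to\cinf(S)$ via $f\mapsto a^\sh(f)$, $df\mapsto b(f)$. If anything, you are more explicit than the paper about the nontrivial direction---that $(a,b)$ actually extends to all of $\Omega^*(M)$---which the paper dispatches with ``one can easily check.''
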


\begin{proof}We want to show that we have isomorphisms
\[ \Psi_S:\sman(S\times\rou, M)\ra \sman(S, \Pi TM), \]

\noindent natural in $S$, where $S$ is an arbitrary supermanifold.
The left hand side is the set of grading preserving maps of
$\bZ/2$-algebras $$\f:\cinf(M)\ra \cinf(S\times\rou)=
\cinf(S)[\th]. $$ If we write $\f(f)=\f_1(f)+\th \f_1(f)$, for
$f\in\cinf(M)$, then the fact that $\f(fg)=\f(f)\f(g)$ is
equivalent to the following two
conditions \\\\
$\left\{ \begin{array}{l} \f_1(fg)=\f_1(f)\f_1(g)
\\\\
\f_2(fg)=\f_2(f)\f_1(g)+(-1)^{p(f)}\f_1(f)\f_2(g)
\end{array} \right.$\\\\
The first condition is equivalent to $\f_1=a^\sharp$, for some
$a:S\ra M$. The second condition tells us that $\f_2$ is an
\emph{odd} tangent vector at $a\in M(S)$, i.e. $\f_2= X_a\in
TM_a$. Therefore the left hand side is
\[ \sman(S\times\rou, M)=\ \{ \text{pairs } (a, X_a)\ |\ a\in
M(S),\ X_a\in TM_a,\ X_a \text{ odd} \}. \]

\noindent The right hand side $\sman(S, \Pi TM)$ is the set of
$\bZ/2$-graded algebra maps $\O^*(M)\ra \cinf(S)$. Such maps are
determined by their restriction to 0-forms (functions) and 1-forms
(more specifically, 1-forms of the type $df$, for $f\in\cinf(M)$).
Define then $\Psi_S(a, X_a)$ to be the map $S\ra \Pi TM$
determined by defining it on functions $f\in\cinf(M)$ by
$a^\sharp(f)\in\cinf(S)$, and on forms $df$ by $X_a(f)$. One can
easily check that $\Psi_S$ is well-defined, bijective, and natural
in $S$.

\end{proof}

Let $T:\Pi TM\times\rou\ra\Pi TM$ be the map which on functions is
given by $\O^*(M)\ni\o\mapsto \o+\th d\o\in \O^*(M)[\th]$.
Consider also the map $\m: \underline{\sm}(\rou, M)\times\rou \ra
\underline{\sm}(\rou, M)$, defined on $S$-points
\[ \sm(S\times\rou, M)\times \sm(S,\rou)\ra \sm(S\times\rou, M) \]
by $(\f,\eta)\mapsto \f\comp(1\times m)\comp(1\times\eta\times
1)\comp(\D\times 1)$, where $m$ is the group composition map on
$\rou$. The maps $T$ and $\m$ define an action of $\rou$ on the
corresponding spaces.

\begin{lem} \label{equiv} The map defined in the previous lemma $$\Psi: \underline{\sm}(\rou, M)\ra \Pi TM$$  is
$\rou$-equivariant.
\end{lem}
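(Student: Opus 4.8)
The plan is to show that the square with top and bottom arrows $\m$ and $T$ and vertical arrows $\Psi\times 1$ and $\Psi$ commutes, i.e. that
\[ \Psi\comp\m=T\comp(\Psi\times 1) \]
as maps $\underline{\sm}(\rou,M)\times\rou\ra\Pi TM$. Since both sides are maps of (generalized) supermanifolds, Yoneda's lemma reduces the claim to checking the equality on $S$-points, naturally in an arbitrary $S$. So I would fix an $S$-point $(\f,\eta)$, that is $\f\in\sm(S\times\rou,M)$ and $\eta\in\sm(S,\rou)$, and by Lemma \ref{otb} expand $\f^\sh(f)=a^\sh(f)+\th\,X_a(f)$ for $f\in\cinf(M)$, where $a\in M(S)$ and $X_a\in TM_a$ is the associated odd tangent vector; I also write $\bar\eta=\eta^\sh(\th)$ for the corresponding odd function on $S$.

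Going right-then-down, I would first unwind $\m$. Tracing $\f\comp(1\times m)\comp(1\times\eta\times1)\comp(\D\times1)$ through the pullback maps shows that $\m(\f,\eta)$ is simply $\f$ precomposed with the translation $\th\mapsto\bar\eta+\th$ of the odd coordinate, so that
\[ \m(\f,\eta)^\sh(f)=a^\sh(f)+\bar\eta\,X_a(f)+\th\,X_a(f). \]
Applying $\Psi$, i.e. reading off the function part and the coefficient of $\th$ as in Lemma \ref{otb}, gives the $S$-point of $\Pi TM$ determined by $f\mapsto a^\sh(f)+\bar\eta\,X_a(f)$ on functions and $df\mapsto X_a(f)$ on $1$-forms.

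Going down-then-right, $\Psi(\f)$ is the $S$-point $\psi_0$ of $\Pi TM$ with $\psi_0^\sh(f)=a^\sh(f)$ and $\psi_0^\sh(df)=X_a(f)$. Then for a form $\o$,
\[ T(\psi_0,\eta)^\sh(\o)=(\psi_0,\eta)^\sh\bigl(\o+\th\,d\o\bigr)=\psi_0^\sh(\o)+\bar\eta\,\psi_0^\sh(d\o), \]
where the Koszul sign from commuting $\th$ past the odd form $d\o$ conspires with the one from commuting $\bar\eta$ past $X_a(f)$ to leave $\bar\eta$ on the left. For $\o=f$ this equals $a^\sh(f)+\bar\eta\,X_a(f)$, while for $\o=df$ the correction term dies because $d(df)=0$, leaving just $X_a(f)$.

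The two $S$-points of $\Pi TM$ therefore agree both on functions and on exact $1$-forms $df$, and since these generate $\O^*(M)$ as an algebra the $S$-points coincide; naturality in $S$ is built into the construction, so Yoneda finishes the argument. The one place that needs genuine care is the sign bookkeeping in the graded tensor products $\cinf(\Pi TM\times\rou)=\O^*(M)[\th]$ and $\cinf(S\times\rou)$; everything else is a routine trace through the definitions. Conceptually, the diagram commutes because $\m$ merely translates the odd time and hence fixes the ``odd velocity'' $X_a$, exactly as $T$ fixes the $1$-form part $df$ by virtue of $d^2=0$.
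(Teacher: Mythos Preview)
Your proposal is correct and follows essentially the same approach as the paper's proof: both reduce to $S$-points via Yoneda, expand $\f$ as $(a,X_a)$, trace $\m$ through its defining composite to obtain $f\mapsto a^\sh(f)+\bar\eta\,X_a(f)+\th\,X_a(f)$, and compare against the computation of $T$ on the $\Psi$-side, matching on $f$ and $df$. The only difference is presentational---you orient the square with $\m$ and $T$ horizontal rather than vertical, and you add the conceptual remark about $d^2=0$ fixing the $1$-form part---but the argument is the same.
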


\begin{proof}
We want to show that the following diagram is commutative
\[ \xymatrix{ \underline{\sm}(\rou, M)\times\rou \ar[d]_\m \ar[r]_-{\Psi\times
1} & \Pi TM\times\rou \ar[d]^T \\
\underline{\sm}(\rou, M) \ar[r]_\Psi & \Pi TM.} \] We need that,
for each supermanifold $S$, natural in $S$, the following diagram
commutes
\[ \xymatrix{ \sm(S\times\rou, M)\times\sm(S,\rou) \ar[d]_{\m_S} \ar[r]_-{\Psi_S\times
1} & \sm(S,\Pi TM)\times\sm(S,\rou) \ar[d]^{T_S} \\
\sm(S\times\rou, M) \ar[r]_{\Psi_S} & \sm(S,\Pi TM),} \] or, in
terms of functions we need to have
\[ \xymatrix{ \alg(\cinf(M), \cinf(S)[\th])\times\cinf(S)^{odd} \ar[d]_{\m_S} \ar[r]_-{\Psi_S\times
1} & \alg(\O^*(M),\cinf(S))\times\cinf(S)^{odd} \ar[d]^{T_S} \\
\alg(\cinf(M), \cinf(S)[\th]) \ar[r]_{\Psi_S} &
\alg(\O^*(M),\cinf(S)). } \]

For $a\in M(S)$ and $X_a\in TM_a$ denote by $(a,
X_a)\in\alg(\O^*(M),\cinf(S))$ the map determined by $f\mapsto
a^\sharp(f)$ and $df\mapsto X_a(f)$. (Compare the proof of the
previous lemma.) Via the identification
\[ \alg(\O^*(M),\cinf(S))\times\cinf(S)^{odd}=
\alg(\O^*(M)[\th],\cinf(S)), \] the map
\[ T_S: \alg(\O^*(M),\cinf(S))\times\cinf(S)^{odd}\ra \alg(\O^*(M),\cinf(S)) \]
evaluated at
$\f=((a,X_a),\ti{\th})\in\alg(\O^*(M),\cinf(S))\times\cinf(S)^{odd}$
is determined by saying that\\
$f\stackrel{T^\sharp}\longmapsto f+\th df
\stackrel{\f^\sharp}\longmapsto a^\sharp(f)+\ti{\th}X_a(f)=:
b^\sharp(f)\ \ $
and\\\\
$df \over{T^\sh}\longmapsto df \over{\f^\sh}\longmapsto X_a(f)=:
X_b(f),$\\
where $b\in M(S)$ is defined by $b^\sharp(f)=
a^\sharp(f)+\ti{\th}X_a(f)$, for $f\in\cinf(M)$.

On the other hand,
\[ \m_S: \sm(S\times\rou, M)\times\sm(S,\rou) \ra \sm(S\times\rou,
M)\] is defined by
\[ (\a= (a,X_a), \eta)\longmapsto \a\comp(1\times m)\comp(1\times
\eta\times 1)\comp (\D\times 1), \] or, on functions,
$\m_S((a,X_a),\ti{\th})$ is given by
\begin{eqnarray*}
f & \over{\a^\sh}\longmapsto & a^\sh(f)+\th X_a(f) \\
& \over{1\tensor m^\sh}\longmapsto  & a^\sh(f)+(\th_1+\th_2) X_a(f) \\
& \over{1\tensor \n^\sh\tensor 1}\longmapsto &
a^\sh(f)+(\th_1+\ti{\th}) X_a(f) \\
& \over{ \D^\sh\tensor 1}\longmapsto & a^\sh(f)+\ti{\th} X_a(f)+
\th X_a(f)= b^\sh(f)+\th X_b(f).
\end{eqnarray*}

\noindent Therefore we have
\[ \xymatrix{ ((a,X_a),\ \ti{\th}) \ar[d]_{\m_S}
\ar[rr]_-{\Psi_S\times
1} &  &\ \ \ \ \Big\{ (f,df,\th) \ar@{|->}[r] &  (a^\sharp(f), X_a(f),\ \ti{\th})\Big\}  \ar[d]^{T_S} \\
(b,X_b) \ar[rr]_{\Psi_S} & &  (b, X_b)=\Big\{ (f,df) \ar@{|->}[r]
& (a^\sharp(f)+\ti{\th}X_a(f), X_a(f))\Big\}, } \] which verifies
the commutativity of the above diagram. The lemma is proved.

\end{proof}

\subsection{Differential Equations on Supermanifolds.}\label{de}

In what follows we will show that vector fields (even or odd) on
supermanifolds can be integrated. We consider first the
\textbf{even} case.
\begin{lem} Let $X$ be an even vector field on a compact supermanifold
$M$ (i.e. the underlying manifold is compact). Then there exists a
unique map $c:\bR \times M\ra M$ satisfying the following two
conditions: $$\left\{
\begin{array}{l}
\partial_t\comp c^\sharp= c^\sharp\comp X \\
c\big|_{0\times M}=id_M.
\end{array} \right.$$
\end{lem}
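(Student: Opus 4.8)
The plan is to reduce the problem of integrating the even vector field $X$ on the supermanifold $M$ to a classical integration problem on the reduced manifold, together with a fiberwise-linear ODE in the nilpotent directions. Working in the functor-of-points language, I would construct $c$ as a map $\bR\times M\ra M$ by specifying, for each supermanifold $S$, the induced map $c^\sharp:\cinf(M)\ra\cinf(\bR\times M)$ on global sections, since by the excerpt a morphism is determined by its effect on global sections. The defining equation $\partial_t\comp c^\sharp=c^\sharp\comp X$ is a linear first-order ODE in the time variable $t$ with values in the (infinite-dimensional) space of maps $\cinf(M)\ra\cinf(M)$, and the initial condition $c|_{0\times M}=id_M$ fixes the Cauchy data.

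First I would choose a finite filtration of the structure sheaf by powers of the nilpotent ideal $\sj$, so that $\sO_M/\sj=\cinf(M_{red})$ and $\sj^{N+1}=0$ for some $N$ (this $N$ exists locally because $\Lambda[\theta_1,\ldots,\theta_q]$ is nilpotent, and compactness lets me take it uniform). At the bottom level of the filtration, $X$ descends to an ordinary vector field $X_{red}$ on the compact manifold $M_{red}$, whose flow $c_{red}:\bR\times M_{red}\ra M_{red}$ exists globally and uniquely by the classical theory of flows on compact manifolds. Then I would solve for $c^\sharp$ order by order up the filtration: writing $c^\sharp=c_0^\sharp+c_1^\sharp+\cdots$ according to nilpotence degree, the equation $\partial_t c^\sharp=c^\sharp X$ at each order becomes an inhomogeneous linear ODE whose inhomogeneity involves only the already-determined lower-order terms. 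Because each such equation is linear with the forcing term living in a finite-dimensional (after trivializing over coordinate patches) or at least well-controlled space, Duhamel's principle / variation of parameters gives existence and uniqueness at each stage, and the process terminates after $N$ steps since $\sj^{N+1}=0$.

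I would then check that the locally constructed solutions glue: uniqueness of the solution to the linear ODE at each filtration level guarantees that the maps agree on overlaps, so they patch to a global morphism $c:\bR\times M\ra M$. Uniqueness of $c$ as a whole follows the same way—any two solutions agree modulo $\sj$ by classical uniqueness of flows, and then inductively agree modulo each $\sj^{k}$ by uniqueness of the linear Cauchy problem at that order, hence agree outright. One must also verify that the $c^\sharp$ so produced is genuinely an algebra homomorphism (multiplicative and grading-preserving), not merely linear; this I would get by observing that both $f\mapsto c^\sharp(fg)$ and $f\mapsto c^\sharp(f)c^\sharp(g)$ satisfy the \emph{same} linear ODE with the same initial data—here the Leibniz property of $X$ as a derivation is exactly what makes the product rule compatible with the flow equation—so uniqueness forces them to coincide.

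The main obstacle I anticipate is twofold. The conceptual difficulty is establishing that the formal order-by-order solution assembles into an honest morphism of supermanifolds: one must show simultaneously that the resulting $c^\sharp$ is an algebra map and that it has the correct functorial behavior, and the cleanest route is the uniqueness-of-ODE argument just sketched rather than a direct computation. The technical difficulty is handling the time-dependence and the coefficients correctly in the inhomogeneous linear equations at each filtration level; here compactness of $M_{red}$ is essential to guarantee that the classical flow $c_{red}$ is defined for all $t\in\bR$ (otherwise one would only get a flow defined on an open subset of $\bR\times M$), and without it the statement as given would fail. Once global-time existence of the base flow is in hand, the higher-order corrections are automatically defined for all $t$ because they solve linear equations along the already-global base flow.
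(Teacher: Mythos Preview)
Your proposal is correct and follows essentially the same strategy as the paper: reduce to a classical ODE for the flow on the reduced manifold, then solve inhomogeneous linear ODEs for the higher-order nilpotent corrections, using compactness to get global-in-time existence. The paper carries this out in explicit local coordinates on $\bR^{p|q}$, expanding $c^\sharp(x^i)=\sum_J c^i_J(t,x)\theta^J$ and Taylor-expanding the coefficients of $X$; your filtration by powers of $\sj$ is the coordinate-free version of the same $\theta^J$-expansion.

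The one methodological difference worth noting is how multiplicativity of $c^\sharp$ is handled. The paper constructs $c$ by specifying the images $c^i=c^\sharp(x^i)$ of the coordinate functions; since a morphism of supermanifolds is determined by where the coordinates go, this automatically produces an algebra homomorphism and no separate check is needed. Your route---solving for $c^\sharp$ as a linear map level by level and then invoking ODE uniqueness to show $c^\sharp(fg)=c^\sharp(f)c^\sharp(g)$---is a valid alternative, but slightly more work. Either way the argument goes through.
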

\noindent The map $c$ is called the \emph{flow of the vector field
$X$}.

\begin{proof}
The existence and uniqueness of a global solution follows from the
existence and uniqueness of a local solution, since $M$ is
compact. To solve the local problem, we can w.l.o.g. assume that
$M= \rpq$. Let $x^1,\ldots, x^{p+q}$ be the coordinate functions
on $\rpq$, with the first $p$ coordinates even, and the last $q$
odd. We also write $\th_1, \ldots, \th_q$ for the last $q$ odd
coordinates. Let $c^i$ be the image of $x^i$ under the map
$c^\sharp$. Let us write
\[ c^i= \sum c^i_J\th^J, \text{ with } c^i_J\in\cinf(\bR\times\bR^p). \]
The vector field $X$ can be written $X= \sum_1^{p+q}
a_i\partial_{x^i},$ with $a_i$ even, for $i=1,\ldots,p$,
respectively odd, for $i=p+1,\ldots p+q$, functions on $\rpq$. We
further write
\[ a_i= \sum a^i_J\th^J, \text{ with } a^i_J\in\cinf(\bR^p), \]
with some of the $a^i_J$ possibly zero. The first condition above
holds for a map $c:I\times M\ra M$, with $I$ a small neighborhood
of $0$, if and only if it holds when evaluated on the coordinate
functions $x^i$ on $\rpq$. Consequently, we must have that
\[ \partial_t c^i= c^\sharp(a_i). \]
Equivalently, we have
\begin{eqnarray*}
\sum \frac{dc^i_J}{dt}(t, x)\th^J &=& a_i(c(t, x, \th))\\
&=& a_i(\sum_J c_J(t, x)\th^J)\\
&=& a_i(c_0(t, x)+ \sum_{J\neq 0}c_J(t, x)\th^J)\\
&=& a_i(c_0(t, x)) + \sum\frac{\partial a_i}{\partial
x^j}(c_0(t, x))c_J^j(t, x)\th^J+\ldots \\
&=& a_i(c_0(t, x)) + \sum f_J^i(\frac{\partial^La_i}{\partial
x^L}(c_0(t, x)), c_K(t, x)) \th^J,
\end{eqnarray*}
where $f^i_J$ are polynomial functions on some large euclidean
space, $|L|\leq p$, and $|K|\leq q$. The fourth equality comes
from the Taylor expansion for the function $a_i$ around $c_0(t,
x)$. Equating the coefficients of the above relation, we
obtain the system \\\\
$\left\{
\begin{array}{l} \frac{dc^i_0}{dt}(t, x)= a_i(c_0(t, x)), \ \ \
i=1,\ldots, p\\\\
\frac{dc^i_J}{dt}(t, x)= \sum f_J^i(\frac{\partial^La_i}{\partial
x^L}(c_0(t, x)), c_K(t, x)),\ \ \ \ 0\neq |J|\leq q
\end{array} \right.$ \\ \\
We solve first the system of the first $p$ equations to determine
$c_0$, and then the first order system of differential equations
determined by the last $(p+q)(2^q-1)$ equations. The initial
condition of the system is given by the relations
\[ x^i= \sum c^i_J(0, x)\th^J, \ \ \ \ i=1,\ldots, p+q, \]

\noindent which reflect the condition (2) in the statement of the
lemma. By the general theory of systems of differential equations,
the above system admits a unique solution. The lemma follows.
\end{proof}

More generally, given an even vector field $X$ on a supermanifold
$M$, and a parametrising supermanifold $S$, we have a unique
solution $\a:\bR\times S\ra M$ of the system $$\left\{
\begin{array}{l} \partial_t\comp \a^\sharp= \a^\sharp\comp X \\
\a\big|_{0\times S}=f
\end{array} \right.$$
for some initial condition $f:S\ra M$. It is given by
\[ \a= c \comp (1\times f) \]
where $c$ is the flow determined by $X$. The map $\a$ gives us a
family of \emph{integral curves} for the vector field $X$
parametrised by $S$.

Next, we consider the \textbf{odd} case.

\begin{lem} Let $M$ be a compact supermanifold and $X$ be an odd vector field on
$M$. Then there exists a unique map $\a:\ruu\times S\ra M$
satisfying the following two conditions:
$$\left\{
\begin{array}{l} D\comp \a^\sharp=
\a^\sharp\comp X \\
\a\big|_{0\times S}=f
\end{array} \right.$$
for some initial condition $f:S\ra M$. Here $D= \partial_\th +
\th\partial_t$ is as usual.
\end{lem}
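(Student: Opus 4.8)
The plan is to reduce the odd system to the even flow equation of the previous lemma by squaring the operator $D$. The crucial algebraic facts are that $D^2=\partial_t$ (as computed earlier on $\ruu$) and that, because $X$ is odd, its square $X^2=\frac12[X,X]$ is an \emph{even} vector field on $M$. Thus applying $D$ twice to a solution of $D\comp\alpha^\sh=\alpha^\sh\comp X$ produces the even equation $\partial_t\comp\alpha^\sh=\alpha^\sh\comp X^2$, and it is this even field $X^2$ whose flow will carry all the analytic content.

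First I would set up the $\theta$-decomposition. Writing $\cinf(\ruu\times S)=\cinf(\bR\times S)[\theta]$, every grading-preserving algebra homomorphism $\alpha^\sh:\cinf(M)\ra\cinf(\bR\times S)[\theta]$ has the form $\alpha^\sh(g)=\beta(g)+\theta\,\gamma(g)$ with $\beta,\gamma:\cinf(M)\ra\cinf(\bR\times S)$, where $\beta$ is an ordinary homomorphism and $\gamma$ is a parity-raising $\beta$-derivation. A short computation gives $D\alpha^\sh(g)=\gamma(g)+\theta\,\partial_t\beta(g)$ and $\alpha^\sh\comp X(g)=\beta(Xg)+\theta\,\gamma(Xg)$, so the odd system $D\comp\alpha^\sh=\alpha^\sh\comp X$ is \emph{equivalent} to the two conditions $\gamma(g)=\beta(Xg)$ and $\partial_t\beta(g)=\beta(X^2g)$. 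The second is exactly the even flow equation for $X^2$, while the first determines the odd part $\gamma$ algebraically from $\beta$.

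For existence I would invoke the previous lemma applied to the even field $X^2$ with initial condition $f$: there is a unique $\bar\alpha:\bR\times S\ra M$ with $\partial_t\comp\bar\alpha^\sh=\bar\alpha^\sh\comp X^2$ and $\bar\alpha|_{0\times S}=f$, and this is where compactness of $M$ enters, guaranteeing a solution for all $t$. I would then \emph{define} $\alpha^\sh(g):=\bar\alpha^\sh(g)+\theta\,\bar\alpha^\sh(Xg)$ and check three things: that $\alpha^\sh$ is a grading-preserving algebra map (the Leibniz check uses that $X$ is an odd derivation and that $\theta$ anticommutes with odd elements, so the cross-terms assemble into the graded Leibniz rule and the $\theta^2$-term drops out), that it satisfies $D\comp\alpha^\sh=\alpha^\sh\comp X$ (substitute $\partial_t\bar\alpha^\sh=\bar\alpha^\sh X^2$ into the formula for $D\alpha^\sh$), and that restricting to $t=0,\ \theta=0$ returns $f^\sh$. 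Uniqueness is then immediate from the equivalence above: the $\theta^0$-part of any solution solves the even system and hence equals $\bar\alpha^\sh$ by uniqueness in the even case, while the $\theta^1$-part is forced to be $\bar\alpha^\sh(X\,\cdot\,)$.

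The main obstacle is bookkeeping rather than conceptual: one must pin down the sign conventions for $\partial_\theta$ acting on $\theta\,\gamma(g)$ and in the graded Leibniz rule, so that the candidate $\alpha^\sh$ is genuinely an algebra homomorphism and genuinely solves the equation; keeping these signs consistent is the only delicate point. Everything global, namely existence of the flow over all of $\ruu$ rather than merely a germ, is inherited from the compact even case through $X^2$.
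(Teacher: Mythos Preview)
Your proposal is correct and follows essentially the same approach as the paper: both decompose $\alpha^\sharp$ along $\theta$, observe that the odd equation forces the $\theta$-coefficient to be $\beta\circ X$ and that the $\theta$-free part $\beta$ satisfies the even flow equation for $X^2$, and then invoke the previous lemma on even flows. The only difference is presentational: the paper carries this out in local coordinates on $\bR^{p|q}$ (writing $\alpha=G+\theta H$ and identifying the resulting even system with the flow of $\sum_j(\partial a_i/\partial x^j)a_j$, which is $X^2$), whereas you phrase the same reduction coordinate-free.
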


\begin{proof} Again, it is enough to solve the problem locally,
for which we can assume that $M= \rpq$. Write $X= \sum a_i
\partial_{x^i} $. Then the first relation on arbitrary functions
$g$ on $\rpq$ gives
\begin{equation} \label{dag}\sum\Big(\frac{\partial
g}{\partial x^i}\comp\a\Big) \frac{\partial \a^i}{\partial\th}+
\sum\th\Big(\frac{\partial g}{\partial x^i}\comp\a\Big)
\frac{\partial \a^i}{\partial t} = \sum\Big(a_i\frac{\partial
g}{\partial x^i}\Big)\comp \a.
\end{equation}

\noindent Let us write $\a= G+\th H$, with $G, H\in \cinf(I\times
S)$, for some $I$ a neighborhood of $0$. Then, by Taylor's
expansion, we have
\[ a_i(\a)= a_i(G)+ \sum_j\th\frac{\partial a_i}{\partial x^j}(G)H^j\]
and (\ref{dag}) becomes
\[ H^i+ \th\Big(\frac{\partial G^i}{\partial t}+\th\frac{\partial H^i}{\partial
t}\Big)= a_i(G)+ \sum_j\th\frac{\partial a_i}{\partial x^j}(G)H^j.
\] This is equivalent to the system
$$\left\{
\begin{array}{l} a_i(G)= H^i\\\\
\frac{\partial G^i}{\partial t}(s,t)= \sum_j\frac{\partial
a_i}{\partial x^j}(G(s,t))H^j(s,t)
\end{array} \right.$$
which gives rise to the system
\begin{equation}
\label{ddag}\frac{\partial G^i}{\partial t}(s,t)=
\sum_j\frac{\partial a_i}{\partial x^j}(G(s,t))a_j(G(s,t)).
\end{equation}
Now, $\sum_j\frac{\partial a}{\partial x^j}a_j$ is an \emph{even}
vector field on $\rpq$, so, by the previous lemma and the ensuing
remark, the system (\ref{ddag}) admits a unique solution once we
know $G(0,s)$, which is given by the initial condition $f:S\ra M$.
The lemma is proved.
\end{proof}

\begin{rem} The flow of an odd (even) vector field defines
actually an $\ruu$-action (respectively an $\bR$-action) on the
(compact) supermanifold.
\end{rem}
Let $X$ be an odd vector field on a
supermanifold $M$, and let $\a:\ruu \times M\ra M$ be the flow of
$X$. By definition, the following diagram is commutative
\[ \xymatrix{\cinf(M) \ar[r] ^-{\a^\sharp} \ar[d]_{X} & \cinf(\ruu \times M) \ar[d]^D\\
\cinf(M) \ar[r] ^-{\a^\sharp} & \cinf(\ruu \times M).} \]

\noindent Let $u:S\ra \ruu \times M$ be an $S$-point of $\ruu
\times M$. Then
\[ u^\sh\comp D\comp\a^\sh= u^\sh\comp\a^\sh\comp X, \]
which is to say that
\[  \a_{*u}(D_u)= X_{\a(u)}, \]
where $\a_*$ is the differential of $\a$. If we denote $u= (t,\th,
x)$, then the equation above can also be written
\[ \partial_D\a(t,\th, x)= X(\a(t,\th, x)). \]
This relation probably justifies our way of looking at a
differential equation as a commutative diagram. (See also
\cite{Sh}.)

Again, let $X$ be an odd vector field on a supermanifold $M$. By
the lemma above, $X$ defines a flow $\a:\ruu\times M\ra M$. Define
the map $\a_0:\bR\times M\ra M$ by $\a_0= \a\comp (i\times 1_M)$,
where $i:\bR\ra\ruu$ is the standard inclusion map. Moreover $i$
is a group homomorphism, if $\bR$ and $\ruu$ are endowed with the
standard group structures. Therefore $\a_0$ defines a flow map.

\begin{lem} The map $\a_0$ is the flow of the even
vector field $X^2$.
\end{lem}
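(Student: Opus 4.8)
The plan is to appeal to the uniqueness half of the flow lemma for even vector fields: since $X^2$ is even and $M$ is compact, $X^2$ admits a \emph{unique} flow, so it suffices to verify that $\a_0=\a\comp(i\times 1_M)$ satisfies the two defining conditions of that flow, namely
\[ \partial_t\comp\a_0^\sh=\a_0^\sh\comp X^2 \qquad\text{and}\qquad \a_0\big|_{0\times M}=id_M. \]
Everything then reduces to manipulating the intertwining relation $D\comp\a^\sh=\a^\sh\comp X$ defining $\a$, together with the pullback $i^\sh$.

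First I would \emph{square} the defining relation of $\a$. Reading $D\comp\a^\sh=\a^\sh\comp X$ as an equality of operators $\cinf(M)\ra\cinf(\ruu\times M)$ and composing it with itself yields
\[ D^2\comp\a^\sh=D\comp(\a^\sh\comp X)=(D\comp\a^\sh)\comp X=(\a^\sh\comp X)\comp X=\a^\sh\comp X^2. \]
No sign subtleties intervene: I am merely composing the same relation twice and letting associativity of operator composition do the work. Since $D^2=\partial_t$ was computed earlier on $\ruu$, this reads $\partial_t\comp\a^\sh=\a^\sh\comp X^2$ on $\ruu\times M$; note also that $X^2=X\comp X$ agrees with $\frac{1}{2}[X,X]$ precisely because $X$ is odd.

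It then remains to pull this identity back along $i\times 1_M$ and to match initial conditions. Applying $(i\times 1_M)^\sh$ and using $\a_0^\sh=(i\times 1_M)^\sh\comp\a^\sh$ gives $\a_0^\sh\comp X^2=(i\times 1_M)^\sh\comp\partial_t\comp\a^\sh$. The point is that $(i\times 1_M)^\sh$ commutes with $\partial_t$: the standard inclusion acts on functions by $t\mapsto t$ and $\th\mapsto 0$, whence $i^\sh\comp\partial_t=\partial_t\comp i^\sh$, and therefore $(i\times 1_M)^\sh\comp\partial_t\comp\a^\sh=\partial_t\comp\a_0^\sh$. This produces the wanted flow equation $\partial_t\comp\a_0^\sh=\a_0^\sh\comp X^2$. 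For the initial condition, since $i$ sends $0\in\bR$ to the identity $e=(0,0)\in\ruu$, restricting $\a_0=\a\comp(i\times 1_M)$ to $0\times M$ recovers $\a\big|_{0\times M}=id_M$. Uniqueness of the even flow then identifies $\a_0$ with the flow of $X^2$.

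The one step I expect to require genuine care is the restriction: checking that $(i\times 1_M)^\sh$ really intertwines the operator $\partial_t$ on $\ruu\times M$ with $\partial_t$ on $\bR\times M$, so that the identity established upstairs on $\ruu\times M$ descends intact. The squaring step is the conceptual heart but is essentially immediate once the defining relation is read as an operator identity.
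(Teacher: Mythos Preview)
Your proof is correct and follows essentially the same route as the paper: square the defining relation $D\comp\a^\sh=\a^\sh\comp X$ to obtain $\partial_t\comp\a^\sh=\a^\sh\comp X^2$, then use that $\partial_t$ commutes with $i^\sh\tensor 1$ to descend to $\a_0$. The paper's version is terser, leaving the initial condition and the appeal to uniqueness implicit, but the argument is the same.
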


\begin{proof}
Indeed, by definition $\a^\sh\comp X= D\comp\a^\sh$. Therefore
\begin{eqnarray*}
  \a^\sh\comp X^2 &=& D\comp\a^\sh\comp X \\
   &=& D\comp D\comp\a^\sh \\
   &=& \partial_t\comp\a^\sh.
\end{eqnarray*}
Since $\partial_t$ commutes with $i^\sh\tensor 1$, the claim
follows.
\end{proof}

\noindent\emph{Example}: Let $D$ be the usual vector field on
$\ruu$. Then the flow of $D$ is given by the group multiplication
map $m:\ruu\times\ruu \ra \ruu$. To see this, we should verify
that $m$ fits into the diagram

\[ \xymatrix{\cinf(\ruu) \ar[r] ^-{m^\sharp} \ar[d]_{D} & \cinf(\ruu \times \ruu) \ar[d]^{D\tensor 1}\\
\cinf(\ruu) \ar[r] ^-{m^\sharp} & \cinf(\ruu \times \ruu).} \]
This is indeed the case: the diagram expresses the fact $D$ is a
right invariant vector field.

\section{Connections on supermanifolds and their parallel transport}
The purpose of this section is to describe the parallel transport
along superpaths of a connection on a super vector bundle over a
supermanifold. This follows closely the geometric idea of parallel
transport associated to a connection on a vector bundle over a
manifold.

\subsection{Setup}

Let $E$ be a super vector bundle over a supermanifold $M$, and let
$\na$ be a connection on $E$ (see \cite{DM}), i.e. $\na: \G(M,
E)\ra \O^1(M, E)$ such that
\[ \na(fs)= df\tensor s+ f\na s,\ \ \ \ f\in\cinf(M),\ \
s\in\G(M, E). \] In particular, for $X\in \sX(M)$ a vector field
on $M$, we have $\na_X: \G(M, E)\ra \G(M, E)$ with
\[ \na_X(fs)= X(f)s +(-1)^{p(X)p(f)} f\na_Xs. \]
Let $c:S\times\ruu \ra M$ be a (family of) supercurve(s
parametrized by a supermanifold $S$) in $M$. Consider the
pull-back connection $c^*\na$ and the derivation $(c^*\na)_\str:
\G(c^*E)\ra\G(c^*E)$. Here $\str$ is the vector field $\vd$ on
$\ruu$, extended trivially to $S\times\ruu$. An element of
$\G(c^*E)$ is called a \emph{section of $E$ along $c$}. We say
that the section $s$ along $c$ is \emph{parallel} if
$$(c^*\na)_Ds= 0.$$
In local coordinates, we can think of this as being a
\emph{half-order differential equation}. There are two reasons for
that: first, the vector field $D$ squares to the vector field
$\frac{d}{dt}$, second, for $2n$ unknown functions we need as
initial data $n$ values.

\begin{prop} \label{lift}
Let $c:S\times\ruu \ra M$ be a supercurve in the compact
supermanifold $M$ (i.e. the reduced manifold is compact). Let
$\psi_0\in\G(c_{0,0}^*E)$ be a section of $E$ along $c_{0,0}:S\ra
S\times\ruu\ra M$, with the first map the standard inclusion
$i_{0,0}:S \ra S\times\ruu$. Then, there exists a unique parallel
section $\psi$ of $E$ along $c$, such that $\psi(0,0)= \psi_0$.
\end{prop}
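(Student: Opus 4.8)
The plan is to mimic the proofs of the differential-equation lemmas of Section~\ref{de}: reduce to a local model where the parallel-transport equation is explicit, observe that it is a \emph{half-order} system in the sense noted after the definition of a parallel section, and solve the underlying even equation by the (parametrized) even-flow lemma already established.

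First I would localize. Uniqueness is a local statement: if two parallel sections agree at $(0,0)$, their difference is parallel with vanishing initial value and must vanish on each coordinate patch, hence everywhere; so it suffices to construct a local solution and patch by uniqueness. To set up the local problem, shrink $|S|$ to a chart and trivialize $c^*E$ over a piece of $S\times\ruu$, choosing a frame $e_1,\dots,e_r$. Writing $(c^*\na)_D e_k=\sum_j\o^j_k e_j$ with $\o^j_k\in\cinf(S\times\ruu)$, a section $\psi=\sum_k\psi^k e_k$ is parallel iff
\[ D\psi^j=-\sum_k(-1)^{p(\psi^k)}\psi^k\,\o^j_k\qquad(j=1,\dots,r),\]
a first-order system in the odd field $D=\partial_\th+\th\partial_t$.

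The key step is to exploit $D^2=\partial_t$. Expanding in the odd coordinate of $\ruu$, write $\psi^k=f^k+\th g^k$ and $\o^j_k=\a^j_k+\th\b^j_k$ with $f^k,g^k,\a^j_k,\b^j_k\in\cinf(S\times\bR)$. Since $D\psi^j=g^j+\th\,\partial_t f^j$, matching the $\th^0$- and $\th^1$-components splits the system into an \emph{algebraic} equation expressing each $g^j$ as a linear combination of the $f^k$, together with an \emph{even, first-order, linear} system $\partial_t f^j=\sum_k A^j_k f^k$ obtained by substituting the $g$'s back in. This is exactly the half-order phenomenon: only the $f$-part needs initial data, and at $(t,\th)=(0,0)$ one has $\th=0$, so the prescribed $\psi_0$ supplies precisely $f^k(\,\cdot\,,0)\in\cinf(S)$. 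The even linear system is of the type handled by the even-vector-field lemma and its $S$-parametrized refinement from Section~\ref{de}: expanding $f$ in the odd coordinates of $S$ reduces it to a hierarchy of classical linear ODEs with a unique global solution determined by $f(\,\cdot\,,0)$. This yields $f$, hence $g$, hence a unique local parallel $\psi$ with $\psi(0,0)=\psi_0$.

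Finally I would globalize. Local solutions agree on overlaps by uniqueness and so glue over $|S|$; and because the $f$-equation is \emph{linear} there is no finite-time blow-up, so the solution extends over the entire $t$-line, covering the image of each compact $t$-interval by finitely many charts (as permitted by compactness of $M$, as in Section~\ref{de}) and composing. The main obstacle is bookkeeping: carrying the $\bZ/2$-signs correctly through the frame computation, checking that the $\th^1$-component really collapses to a closed even system after eliminating the algebraic $g$'s, and verifying that $\psi_0$ matches the initial data of that even system. Once this is in hand, the statement reduces to the ODE lemmas already proved.
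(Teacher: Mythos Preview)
Your proposal is correct and follows essentially the same route as the paper: localize, trivialize the bundle, expand $\psi$ and the connection form in the odd coordinate $\th$ to split the parallel-transport equation into an algebraic part determining the $\th$-component and a linear first-order ODE in $t$ for the remaining part, then solve the latter with the given initial data. The paper carries this out by trivializing $E$ over a neighborhood in $M$ (writing $\na=d+A$) rather than trivializing $c^*E$ directly, but the resulting local system and its analysis are the same.
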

\begin{proof}
The fact that $\psi$ extends to all of $S\times \ruu$ is a
standard argument on the flows of vector fields on \emph{compact}
manifolds. The existence (and uniqueness) of $\psi$ is then a
local problem. Let $U\subseteq M$ be a trivializing neighborhood
such that $E_{|U}\cong U\times \rpq$ ($p|q$ is the rank of the
bundle $E$). Then the connection can be written as $\na= d+ A$,
for some $ A\in \O^1(M)\tensor End(\rpq)^{ev}$. The equation
$(c^*\na)_Ds= 0$ with the given initial condition is then
equivalent to the system
\\\\
$\left\{
\begin{array}{l} \frac{\partial\psi}{\partial D}(s,t,\th) + A(s,t,\th)\psi(s,t,
\th)=0\\\\
\psi(s,0,0)=\psi_0(s)
\end{array} \right.$\\\\
where $\psi$ is defined in a neighborhood of $S\hookrightarrow
S\times \ruu$ with values in $\rpq$, and $A:S\times\ruu \ra
End(\rpq) $ is short for $(c^*A)(D)$. If we write\\
$\psi(s,t,\th)= (a^i(s,t)+\th b^i(s,t))_{i=1, \ldots, p+q}$ \\
$A(s,t,\th)= (c^{ij}(s,t)+\th d^{ij}(s,t))_{i,j=1, \ldots,
p+q}$\\
then the system is equivalent to
\\\\
$\left\{
\begin{array}{l} b^i(s,t)= -c^{ij}(s,t)a^j(s,t)\\\\
\frac{da^i}{dt}(s,t)= -\e(c^{ij}(s,t))b^j(s,t)-
d^{ij}(s,t)a^j(s,t)\\\\
a^i(s,0)=\psi^i_0(s)
\end{array} \right.$\\\\
Here $ \e(a)= \left\{
\begin{array}{l} \ a,\ \ \ \ \ \text{ if $a$ is even}\\
-a,\ \ \ \ \text{ if $a$ is odd}
\end{array} \right. $\\
\noindent It is clear that this system admits a unique solution
around $S\times (0,0)$. The proposition is proved.
\end{proof}

\begin{lem} \label{naturality} \textbf{(Naturality in $S$)}
Let $c:S\times \ruu\ra M$ be a supercurve in $M$, and let
$\f:S'\ra S$ be an arbitrary map. Consider the supercurve
$c':S'\times \ruu\ra M$ defined by $c'= c\comp \bar{\f}$, where
$\bar{\f}=\f\times 1_{\ruu}$. If $\psi$ is a parallel section
along $c$, then $\psi\comp\bar{\f}$ is parallel along
$c\comp\bar{\f}$.
\end{lem}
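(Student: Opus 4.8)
The plan is to reduce the statement to two functoriality properties: that the pullback of a connection is functorial under composition of maps, and that the vector field $D$ is compatible with $\bar{\f}=\f\times 1_{\ruu}$. The point is that $\bar{\f}$ is the identity on the $\ruu$-factor, so it leaves the coordinates $(t,\th)$ untouched and therefore preserves $D=\vd$. Concretely, $D$ on $S'\times\ruu$ is $\bar{\f}$-related to $D$ on $S\times\ruu$, meaning
\[ D\comp\bar{\f}^\sh= \bar{\f}^\sh\comp D \]
as operators $\cinf(S\times\ruu)\ra\cinf(S'\times\ruu)$; this is immediate, since $D$ differentiates only in the $\ruu$-directions, on which $\bar{\f}^\sh$ acts as the identity, and these directions commute with the action of $\f^\sh$ on functions pulled back from $S$.

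First I would interpret $\psi\comp\bar{\f}$ correctly. Since $c'=c\comp\bar{\f}$, we have $(c')^*E=\bar{\f}^*(c^*E)$, and the section $\psi\in\G(c^*E)$ pulls back to a section $\bar{\f}^*\psi\in\G((c')^*E)$, which is what we denote $\psi\comp\bar{\f}$. Next I would invoke the functoriality of the pullback connection, namely $(c')^*\na=\bar{\f}^*(c^*\na)$, which follows directly from the defining Leibniz property of the pullback connection applied to local generators of $c^*E$ of the form $c^*s$. The heart of the argument is then the commutation of covariant differentiation with pullback along $\bar{\f}$ in the $D$-direction: using the $\bar{\f}$-relatedness of the two copies of $D$, one checks on such local generators that
\[ \big((c')^*\na\big)_D(\bar{\f}^*\psi)= \bar{\f}^*\big((c^*\na)_D\psi\big). \]

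Granting this identity, parallelism of $\psi$ gives $(c^*\na)_D\psi=0$, hence $\big((c')^*\na\big)_D(\psi\comp\bar{\f})=\bar{\f}^*(0)=0$, so $\psi\comp\bar{\f}$ is parallel along $c'=c\comp\bar{\f}$, as desired. The main obstacle is the careful bookkeeping of signs in verifying the displayed commutation relation in the $\bZ/2$-graded setting; however, because $\bar{\f}$ is a product map that is the identity on the $\ruu$-factor carrying $D$, no sign subtleties actually arise and the verification reduces to the elementary relatedness identity above. Alternatively, one could argue entirely in the local coordinates of Proposition \ref{lift}: since $\bar{\f}$ fixes $(t,\th)$, pulling the system for $\psi$ back along $\bar{\f}$ yields precisely the corresponding system for $\psi\comp\bar{\f}$ with connection data $(c')^*A=\bar{\f}^*(c^*A)$, whence the conclusion.
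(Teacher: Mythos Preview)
Your proof is correct and follows essentially the same argument as the paper: both hinge on the observation that $\bar{\f}_*D=D$ (equivalently $D\comp\bar{\f}^\sh=\bar{\f}^\sh\comp D$) together with functoriality of the pullback connection to obtain $((c')^*\na)_D(\bar{\f}^*\psi)=\bar{\f}^*\big((c^*\na)_D\psi\big)$. The paper's version is just a terser rendition of the same computation, writing $(\bar{\f}^*c^*\na)_D(\bar{\f}^*\psi)=\bar{\f}^\sh\big((c^*\na)_{\bar{\f}_*D}\psi\big)=\bar{\f}^\sh\big((c^*\na)_D\psi\big)=0$.
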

\begin{proof}
$\psi$ is parallel along $c$ if $(c^*\na)_D\psi= 0.$ Let us
observe that $\bar{\f}_*D= D$.

We therefore have
\begin{eqnarray*}
(\bar{\f}^*c^*\na)_D(\bar{\f}^*\psi) &=& \bar{\f}^\sh\big(
(c^*\na)_{\bar{\f}_*D}\psi\big) \\
&=& \bar{\f}^\sh\big((c^*\na)_{D}\psi\big)\\
&=& 0,
\end{eqnarray*}
since $\psi$ is parallel. That is, $\psi\bar{\f}$ is parallel
along $c\bar{\f}$.

\end{proof}

\begin{rem} \label{qtransport} We could as well defined a parallel section
along a supercurve $c$ in $M$ to be a section $s$ along $c$ that
satisfies the equation
\[ (c^*\na)_\sym s= 0, \]
where $\sym= \vq$. Let us call such sections $\sym$-parallel to
distinguish them from the parallel sections defined above. Their
relevance will become clear in Property (3) of Section
\ref{spath}.
\end{rem}

\subsection{Invariance under reparametrization} \label{reparametrisation} The usual parallel
transport is invariant under reparametrization of paths. We will
see in this subsection what that means in the super-context.

Let $c:S\times \ruu \ra M$ be a supercurve in $M$ and let $\psi$
be a parallel section of $E$ along $c$, i.e. $(c^*\na)_D\psi= 0$.
Let $\f$ be a family of diffeomorphisms of $\ruu$ that preserve
the distribution $\mathcal{D}$, parametrized by $S$. In
particular, $\f_*D= bD$, for some $b\in\cinf(S\times \ruu)$. Then
we have
\[ ((c\comp\f)^*\na)_{D}(\psi\comp\f)=
((c^*\na)_{\f_*D}\psi)\comp\f= b\cdot ((c^*\na)_D\psi)\comp\f=0.
\]

Therefore, we conclude that if $\psi$ is a parallel section of $E$
along $c$, then $\psi\comp \f$ is a parallel section of $E$ along
$c\comp \f$. We say that the parallel transport defined by the
connection is \emph{invariant under reparametrization}. (In our
case, ``reparametrization" refers to diffeomorphisms that preserve
a distribution.)

This notion of parallel transport along superpaths generalizes the
usual notion of parallel transport along paths associated with a
connection in the sense that a parallel section in the old sense
is parallel in the new sense, and the new parametrization
invariance is compatible with the parametrization invariance in
the old sense (a detailed discussion can be found in \cite{D},
Section 4.3).

\subsection{Recovering the connection from the super parallel
transport}\label{recover} The next topic we want to address is the
following: Given a connection on a super vector bundle and its
associated parallel transport, how can we recover the connection?
The answer goes as follows.

To give a connection $\na$ on $E$ over $M$ amounts to specifying
for each vector field $X$ on $M$ an $X$-derivation
$\na_X=\tilde{X}: \G(M, E)\ra \G(M, E)$, i.e.
\[ \ti{X}(fs)= X(f)s+(-1)^{p(X)p(f)}f\ti{X}(s), \ \ \
f\in\cinf(M), \ \ s\in\G(M,E), \] such that the correspondence
$X\mapsto \ti{X}$ is $\cinf(M)$-linear.

Let $X$ be an \emph{odd} vector field on $M$, and let $\a=\a_X:
\ruu\times M\ra M$ be the flow of $X$. By definition (see Section
\ref{de}), $X$ fits into the following diagram

\[ \xymatrix{\cinf(M) \ar[r] ^-{\a^\sharp} \ar@{-->}[d]_{X} & \cinf(\ruu\times M) \ar[d]^D\\
\cinf(M) \ar[r] ^-{\a^\sharp} & \cinf(\ruu\times M).} \]

The pullback-connection via the path $\a$ will define, via the
vector field $D$, a $D$-derivation $\tilde{D}$, on the sections of
the pull-back bundle
\[ \G(\ruu\times M, \a^*E)= \cinf(\ruu\times M)\tensor_{\cinf(M)}
\G(M, E). \]
\begin{lem}
$$\tilde{D}= D\tensor 1+ 1\tensor\tilde{X},$$ where the right hand side is
defined by
\[ f\tensor s \mapsto Df\tensor s+ f\tensor \ti{X}s,  \ \ \ f\in\cinf(\ruu\times M), \ \ s\in\G(M,E).
\]
\end{lem}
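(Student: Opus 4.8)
The plan is to show that both $\tilde{D}$ and the operator $D\tensor 1+1\tensor\tilde{X}$ are $D$-derivations on the $\cinf(\ruu\times M)$-module $\G(\ruu\times M,\a^*E)=\cinf(\ruu\times M)\tensor_{\cinf(M)}\G(M,E)$, and then to verify that they agree on the generating sections $\a^*s=1\tensor s$, $s\in\G(M,E)$. Since this module is generated over $\cinf(\ruu\times M)$ by the pullback sections $\a^*s$, and a $D$-derivation is determined by its values on a generating set together with the graded Leibniz rule $\tilde{D}(f\sigma)=(Df)\sigma+(-1)^{p(f)}f\tilde{D}\sigma$, agreement on the generators forces the two operators to coincide everywhere.

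For the agreement on generators I would use the defining property of the pullback connection, namely $(\a^*\na)(\a^*s)=\a^*(\na s)$. Contracting with the vector field $D$ and using that $D$ is $\a$-related to $X$ --- which is exactly the content of the flow diagram $\a^\sh\comp X=D\comp\a^\sh$ from Section \ref{de}, i.e. $\a_{*}D=X$ --- gives
\[ \tilde{D}(\a^*s)=(\a^*\na)_D(\a^*s)=\a^*(\na_X s)=1\tensor\tilde{X}s. \]
On the other hand $D(1)=0$, so $(D\tensor 1+1\tensor\tilde{X})(1\tensor s)=1\tensor\tilde{X}s$, which matches. The one computational ingredient here is the identity $(\a^*\o)(D)=\a^\sh(\o(X))$ for a one-form $\o$ on $M$: for exact forms $\o=dg$ it is immediate from $\a^*(dg)=d(\a^\sh g)$ together with the flow relation $D(\a^\sh g)=\a^\sh(Xg)$, and it then extends to all one-forms by $\cinf(M)$-linearity (tensoriality) in $\o$.

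The step I expect to require the most care is verifying that the right-hand side is genuinely well defined on the balanced tensor product $\tensor_{\cinf(M)}$ and is a bona fide graded derivation; this is where the super (Koszul) sign conventions must be tracked honestly, since both $D$ and $\tilde{X}$ are odd. Concretely, one must confirm that applying $D\tensor 1+1\tensor\tilde{X}$ to $\a^\sh(g)\tensor s$ and to $1\tensor gs$ yields the same element of $\G(\a^*E)$ for $g\in\cinf(M)$; the two computations agree precisely because of the flow relation $D\comp\a^\sh=\a^\sh\comp X$ combined with the Leibniz property of $\tilde{X}=\na_X$. Once well-definedness and the derivation property are established, the comparison on generators above completes the proof.
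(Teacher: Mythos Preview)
Your proposal is correct and follows essentially the same route as the paper: observe that both operators are $D$-derivations on $\G(\a^*E)$ and that they agree on the pulled-back sections $\a^*s=1\tensor s$, using the flow relation $\a_*D=X$ to compute $(\a^*\na)_D(\a^*s)=\a^*(\na_X s)$. The paper states this in one line; your version adds the (useful) verification that the right-hand side is well defined on the balanced tensor product $\cinf(\ruu\times M)\tensor_{\cinf(M)}\G(M,E)$ and spells out the identity $(\a^*\o)(D)=\a^\sh(\o(X))$, both of which the paper takes for granted.
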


\begin{proof}
Indeed, both sides are $D$-derivations, and they coincide on
sections of $E$ pulled-back via the map $\a$. To see the latter,
let us write $\a^*s= 1\tensor s$, for $s\in\G(M,E)$. Then
\[ \ti{D}(1\tensor s)= (\a^*\na)_D(\a^*s)=\a^*(\na_Xs)=1\tensor (\ti{X}s)= (D\tensor 1+ 1\tensor \ti{X})(1\tensor s). \]
\end{proof}

Now, the parallel transport depicts in particular the parallel
sections along $c$ in the direction of $D$. That information is
enough to determine $\ti{D}:\G(c^*E)\ra\G(c^*E)$ as a
$D$-derivation. Indeed, locally, if $s_i,\ i=1,\ldots, p+q$ are
linearly independent parallel sections, then any $s\in\G(c^*E)$
can be written $s= \sum f_is_i$, with $f_i\in\cinf(\ruu\times M$).
Then, $\ti{D}(\sum f_is_i)=\sum D(f_i)s_i$. By the Lemma above, we
have in particular $\ti{D}(\a^*s)=\a^*(\ti{X}s)$, for $s\in\G(M,
E)$, and since $\a^*: \G(M, E)\ra \G(\ruu\times M, \a^*E)$ is
injective, knowing $\tilde{D}$, uniquely determines $\tilde{X}$.

Let now $X$ be an \emph{even} vector field on $M$ and let
$\a:\bR\times M\ra M$ be the flow determined by $X$. Let
$\hat{\a}:\ruu\times M\ra M$ be the trivial extension of $\a$,
i.e. $\hat{\a}= \a\comp (p\times 1_M)$, where $p:\ruu\ra\bR$ is
the usual projection (which on functions is the inclusion of
functions on $\bR$ into forms on $\bR$). Then
\[ (\hat{\a}^*\na)_D(\hat{\a}^*s)= \th(\a^*\na)_{\partial_t}(\a^*s)= \th
\a^*(\na_Xs), \] for all sections $s\in\G(M, E)$. Since, as
before, $\a^*$ is injective, the lift of $D$ along $\a$ determines
the lift of $X$ given by the connection.

In this way, via the super parallel transport, we can lift
\emph{all} the vector fields on $M$ to the derivations given by
the connection, in other words, the super parallel transport
\emph{recovers the connection}.
\subsection{Parallel transport along superpaths}
\label{spath} Let $(t,\th)\in\ruu_+(S)$ be an $S$-point of
$\ruu_+$. We define a super-analogue of the interval $I_t= [0,t]$
as follows:

Consider the triplet
\[ \xymatrix{ S \ar@{^{(}->}[r]_-{i_{(0,0)}} & S\times\ruu & S
\ar@{_{(}->}[l]^-{i_{(t,\th)}}}, \] with $i_{(0,0)}(s)= (s,0,0)$
and $i_{(t,\th)}(s)= (s,t(s),\th(s))$. Here $\ruu$ is endowed with
the \emph{standard metric structure} given by the odd vector field
$D=\partial_\eta+\eta\partial_u$ in coordinates $(u,\eta)$ on
$\ruu$ (see Section \ref{metric}). We denote this (family of)
superinterval(s) by $I_{(t,\th)}$.

Let $x$ and $y$ be $S$-points of $M$. A \emph{superpath in $M$
parametrized by the superinterval $I_{(t,\th)}$ and with endpoints
$x$ and $y$} is an equivalence class of supercurves $c: S\times
\ruu \ra M$ with $c\comp i_{0,0}=c(0,0)=x$, respectively $c\comp
i_{t,\th}=c(t,\th)=y$ such that $c\sim c'$ if there exists $\e>0$
such that
\[ c(u,\eta)= c'(u,\eta) \]
for all $(-\eta,0)<(u, \eta)<(t+\eta,\th)$. Here, $``<"$ is a
partially defined order as follows: for $(t,\th),\ (u,
\eta)\in\ruu(S)$, we say
$$(u,\eta)<(t,\th) \ \ \text{ if }\ \
(t,\th)(u, \eta)^{-1}\in\ruu_+(S).$$ Recall that $\ruu$ is a super
Lie group- see Section \ref{liegr}- with the following group
structure
\[ (t,\th),\ (s, \eta)\longmapsto \ (t,\th)(s,\eta):= \ \ (t+s+\th\eta, \th+\eta). \]
In particular, for any supermanifold $S$, $\ruu(S)=\sman(S,\ruu)$
is not just a set but a \emph{group}. $\ruu_+$ is the open
subsupermanifold in $\ruu$ whose reduced part is $\bR_+= (0,\
\infty)$. Such a superpath is denoted for short $c:I_{(t, \th)}
\ra M$.

Let now $c:I_{(t, \th)} \ra M$ be a superpath in $M$. Then the
connection $\na$ on  the bundle $E$ will determine a vector bundle
homomorphism
\[ \xymatrix{ x^*E \ar[dr] \ar[rr]^{SP(c)} & &  y^*E \ar[dl] \\
& S &  } \]

The map $SP(c)$ is given by a $\cinf(S)$-linear map $SP(c): \G(S,
x^*E)\ra \G(S, y^*E)$ described by the following diagram
\[ \xymatrix{ & E \ar[d] & & \\
& M & & \\
S \ar[ur]_x \ar[uur]^v \ar@{^{(}->}[rr]_-{i_{(0,0)}} & &
S\times\ruu \ar[ul]^c \ar@{-->}[uul]_\psi & & S
\ar@{_{(}->}[ll]^-{i_{(t,\th)}} \ar[ulll]_y
\ar@/_/[uulll]_{\psi(t,\th)}, }  \] i.e. $SP(c)(v)= \psi(t, \th)$,
where $\psi$ is the unique parallel section of $E$ along the
supercurve $c$, such that $\psi(0,0)= v$. Since the solution
$\psi$ depends on the local data, it turns out that the map
$SP(c)$ is well defined, i.e. it does not depend on a
representative for the superpath $c:I_{(t, \th)} \ra M$. It is
clearly a $\cinf(S)$-linear map, therefore it defines a bundle map
$SP(c):x^*E\ra y^*E$.

The map $SP$ satisfies the usual properties of a parallel
transport map, i.e. it is compatible with gluing superpaths, and
is invariant under reparametrizations (i.e. diffeomorphisms of
superintervals that preserve the fiberwise conformal structure on
$\ruu$). This forms the content of the following

\begin{thm}

Any connection $\na$ on a super vector bundle $E$ over a
supermanifold $M$ gives rise to a correspondence $SP(\na)= SP$
\[ \xymatrix{ I_{t,\th} \ar[r]^c & M & \ar@{|->}[rr]^-{SP(\na)} & & &
c_{0,0}^*E \ar[r] & c_{t,\th}^*E} \] satisfying the following
properties:

\begin{enumerate}
\item The correspondence $c\mapsto SP(c)$ is smooth, and natural
in $S$ (see Lemma \ref{naturality}). Smoothness means the
following: if $c$ is a family of smooth superpaths parametrized by
a supermanifold $S$, then the map $SP(c):c_{0,0}^*E\ra
c_{t,\th}^*E$ is a \emph{smooth} bundle map over $S$.

\item (Compatibility under glueing) If $c:I_{t,\th}\ra M$ and
$c':I_{t',\th'}\ra M$ are two superpaths in $M$ such that
$c'\equiv c\comp R_{t,\th}$ on some neighborhood $S\times U$ of
$S\times (0,0)\hookrightarrow S\times\ruu$, with $U$ an open
subsupermanifold in $\ruu$ containing $(0,0)$, we have
\[ SP(c'\cdot c)= SP(c')\comp SP(c), \]
where $c'\cdot c:I_{t'+t+\th'\th, \th'+\th}\ra M$ is obtained from
$c$ and $c'$ by glueing them along their ``common endpoint", i.e.
\[ \ \ \ \ \ \ \ \ \ \ \ \ \ (c'\cdot c)(s,u,\eta)=
\left\{
\begin{array}{l}  \ \ \ \ \ c(s,u,\eta),\ \ \ \ \ \ \ \ \ \ \ \ \
\ \ \ \
(u,\eta)<(t+\e,\th)\\ \\
c'(s,(u,\eta)(t,\th)^{-1}), \ \ \ \ \ \ \ \ \ \
(t-\e,\th)<(u,\eta).
\end{array} \right.    \]

\noindent (Here $R_{t,\th}:S\times\ruu\ra S\times\ruu$ is the
right translation by $(t,\th)$ in the $\ruu$-direction, i.e.
$R_{t,\th}(s,(u,\eta))= (s, (u,\eta)(t,\th)).\ )$

\item  For any superpath $c:I_{t,\th}\ra M$, the bundle map
$SP(c):c_{0,0}^*E\ra c_{t,\th}^*E$ is an isomorphism, with inverse
given by $PS(\overline{c}):c_{t,\th}^*E\ra c_{0,0}^*E$, where
$\overline{c}:I_{t,\th}\ra M$ is given by $\overline{c}(u,\eta)=
c((u,\eta)^{-1}(t,\th))$, and, for a superpath $\a$ in $M$,
$PS(\a)$ denotes $Q$-parallel transport along $\a$ (see Remark
\ref{qtransport}).

\item (Invariance under reparametrization) Given $c:I_{t,\th}\ra
M$  a superpath in $M$ and $\f:I_{s,\eta}\ra I_{t,\th}$  a family
of diffeomorphisms of superintervals that preserve the vertical
distribution, we have $$SP(c\comp\f)= SP(c).$$

\end{enumerate}
Moreover, if $\na\neq\na'$ then $SP(\na)\neq SP(\na')$.
\end{thm}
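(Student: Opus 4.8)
The plan is to verify properties (1)--(4) in turn, assembling the pieces already in place, and then to read off the final injectivity statement from Section \ref{recover}. Property (1) is almost immediate: naturality in $S$ is exactly Lemma \ref{naturality}, while smoothness follows from the explicit local system solved in the proof of Proposition \ref{lift}, whose coefficients $c^{ij},d^{ij}$ depend smoothly on the parameters of $S$; hence the solution $\psi$, and with it the endpoint value $\psi(t,\th)$ defining $SP(c)$, is a smooth bundle map over $S$. Well-definedness (independence of the chosen representative of the superpath) was already observed before the statement, since $\psi$ depends only on the germ of $c$ along the relevant portion of $\ruu$.

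For the gluing property (2) I would argue by uniqueness of parallel sections. Given $v\in c_{0,0}^*E$, let $\psi$ be the parallel section along $c$ with $\psi(0,0)=v$ and let $\psi'$ be the parallel section along $c'$ with $\psi'(0,0)=SP(c)(v)=\psi(t,\th)$. On the portion of $S\times\ruu$ where $c'\cdot c$ agrees with $c$ the candidate global section is $\psi$; on the portion where it agrees with $c'\comp R_{(t,\th)^{-1}}$ it is $\psi'\comp R_{(t,\th)^{-1}}$. Because $D$ is right-invariant, $(R_{(t,\th)^{-1}})_*D=D$, so the latter is still parallel, and the matching hypothesis $c'\equiv c\comp R_{t,\th}$ near $(0,0)$ makes the two agree on an overlap; Proposition \ref{lift} then glues them into a single parallel section along $c'\cdot c$. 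Evaluating at the far endpoint gives $SP(c'\cdot c)(v)=\psi'(t',\th')=SP(c')\big(SP(c)(v)\big)$.

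Property (3) is the delicate point and the one I would treat most carefully. Write $r(u,\eta)=(u,\eta)^{-1}(t,\th)$, so that $\barc=c\comp r$ and $r=R_{t,\th}\comp\iota$ with $\iota$ the group inversion $(u,\eta)\mapsto(-u,-\eta)$. A short computation shows that inversion interchanges the two odd fields up to sign, $\iota_*Q=-D$, while right-invariance of $D$ gives $(R_{t,\th})_*D=D$; composing yields $r_*Q=-D$, in particular $r_*Q$ is proportional to $D$. Therefore, if $\psi$ is $D$-parallel along $c$, then $(\barc^*\na)_Q(r^*\psi)=r^*\big((c^*\na)_{r_*Q}\psi\big)=-\,r^*\big((c^*\na)_D\psi\big)=0$, so $r^*\psi$ is $Q$-parallel along $\barc$; since $r$ exchanges the endpoints $(0,0)$ and $(t,\th)$, the $Q$-parallel transport $PS(\barc)$ sends $\psi(t,\th)$ back to $\psi(0,0)$, i.e. $PS(\barc)\comp SP(c)=\mathrm{id}$. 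The reverse composition follows by exchanging the roles of $c$ and $\barc$. The main obstacle is exactly this bookkeeping: verifying $r_*Q=-D$ and seeing that it converts the equation for $D$-parallel sections into the one for $Q$-parallel sections, which explains why $Q$-parallel transport (Remark \ref{qtransport}), and not $D$-parallel transport, provides the inverse.

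Property (4) is contained in Section \ref{reparametrisation}: a family $\f$ of diffeomorphisms preserving the vertical distribution satisfies $\f_*D=bD$ for an invertible $b$, so $\psi\comp\f$ is parallel along $c\comp\f$ whenever $\psi$ is parallel along $c$; as a reparametrization $\f:I_{s,\eta}\ra I_{t,\th}$ fixes both endpoints, the transport maps coincide and $SP(c\comp\f)=SP(c)$. Finally, the injectivity clause follows from Section \ref{recover}: there the super parallel transport is shown to determine, for every odd vector field $X$ with flow $\a$, the lift $\ti D=D\tensor 1+1\tensor\ti X$ and hence (by injectivity of $\a^*$) the operator $\na_X=\ti X$, and likewise $\na_X$ for even $X$ via the trivially extended flow. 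Thus $SP$ reconstructs $\na$ in full, so $SP(\na)=SP(\na')$ forces $\na=\na'$; contrapositively, $\na\neq\na'$ implies $SP(\na)\neq SP(\na')$.
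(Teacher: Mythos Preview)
Your proof is correct and follows essentially the same approach as the paper. The paper also reduces (2) and (3) to the right-invariance of $D$ and the relation between $D$ and $Q$ under inversion (the paper phrases it as $I_*D=-Q$, equivalent to your $\iota_*Q=-D$), while (1), (4), and the injectivity clause are handled by the same references to Lemma~\ref{naturality}, Section~\ref{reparametrisation}, and Section~\ref{recover}; the only cosmetic difference is that the paper carries out (2) and (3) in a local trivialization with the explicit equation $\partial_D\psi+(c^*A)(D)\psi=0$, whereas you use the coordinate-free pullback identity $(\barc^*\na)_Q(r^*\psi)=r^*\big((c^*\na)_{r_*Q}\psi\big)$.
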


\noindent\textbf{Proof of (2). }Since the construction of parallel
transport is natural in $S$ (see Lemma \ref{naturality}), it is
enough to consider the case when $S$ is ``small"  and $c$ and $c'$
map to a trivializing neighborhood $U\subseteq M$ for $E$, such
that $E_{|U}\cong U\times\rpq$ and $\na=d+ A$. If $\psi$ is a
super-parallel section along $c$ with $\psi(0,0)=\psi_0$ and
$\psi'$ is parallel along $c'$ with $\psi'(0,0)=\psi(t,\th)$ then
$\psi'\cdot\psi$ defined by
\[ \psi'\cdot \psi(s,u,\eta)=
\left\{
\begin{array}{l}  \ \ \ \ \ \psi(s,u,\eta),\ \ \ \ \ \ \ \ \ \ \ \ \
\ \ \ \
(u,\eta)<(t+\e,\th)\\ \\
\psi'(s,(u,\eta)(t,\th)^{-1}), \ \ \ \ \ \ \ \ \ \
(t-\e,\th)<(u,\eta)
\end{array} \right.    \]

is a parallel section along $c'\cdot c$. (Observe that
$\psi'\cdot\psi$ is well defined, by Prop. \ref{lift}.) To show
this, it is enough to prove the following
\begin{lem}
Let $c:S\times\ruu\ra M$ be a superpath in $M$, and $A\in
\O^1(M)\tensor End(\rpq)^{ev}$. Let also $\psi:S\times\ruu\ra
\rpq$ be such that
\[ \partial_D\psi+(c^*A)(D)\psi= 0. \]
If $\bar{c}= c\comp R_{(t,\theta)}$ and $\bar{\psi}= \psi\comp
R_{(t,\theta)}$,

then
\[  \partial_D\bar{\psi}+(\bar{c}^*A)(D)\bar{\psi}= 0. \]
\end{lem}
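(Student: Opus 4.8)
The plan is to reduce the desired equation to the hypothesis by pulling everything back along the right translation $R:=R_{(t,\theta)}$ and invoking the right-invariance of $D=\vd$. The one fact that drives the whole argument is that $D$ is a \emph{right-invariant} vector field on $\ruu$, as observed earlier; extended trivially in the $S$-direction this says $R_*D=D$, or equivalently that $R^\sh$ commutes with $D$ as operators on $\cinf(S\times\ruu)$,
\[ R^\sh\comp D = D\comp R^\sh. \]
I would record this commutation first, since it is the crux: it is precisely the content of the right-invariance identity $m^\sh\comp D=(D\tensor 1)\comp m^\sh$ specialized to translation by the $S$-point $(t,\theta)$.

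With this in hand, the two terms of the parallel transport operator transform by mere precomposition with $R$. Writing $\bar{\psi}=\psi\comp R = R^\sh\psi$ componentwise, the commutation above gives for the first-order term
\[ \partial_D\bar{\psi} = D(R^\sh\psi) = R^\sh(D\psi) = (\partial_D\psi)\comp R. \]
For the zeroth-order term I would use $\bar{c}=c\comp R$ together with functoriality of pullback, $\bar{c}^*A = R^*(c^*A)$, and then evaluate the resulting $1$-form on $D$, again using $R_*D=D$, to obtain
\[ (\bar{c}^*A)(D) = (R^*c^*A)(D) = R^\sh\big((c^*A)(D)\big) = \big((c^*A)(D)\big)\comp R. \]
Since precomposition with $R$ is an algebra homomorphism, the product then splits as
\[ (\bar{c}^*A)(D)\,\bar{\psi} = \big((c^*A)(D)\,\psi\big)\comp R. \]

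Adding the two computations and factoring out the precomposition finishes the proof,
\[ \partial_D\bar{\psi}+(\bar{c}^*A)(D)\,\bar{\psi} = \big(\partial_D\psi+(c^*A)(D)\,\psi\big)\comp R = 0\comp R = 0, \]
the vanishing being exactly the hypothesis. The main obstacle is the very first step: confirming $R_*D=D$ for right translation by an arbitrary, possibly odd, \emph{family} $S$-point $(t,\theta)$ rather than by an ordinary point, and checking that the trivial extension to $S\times\ruu$ leaves this undisturbed. This is where the right-invariance identity is essential, as it is stated precisely at the level of $S$-points; once it is established, the remaining steps are formal applications of the chain rule and of the functoriality of pullback of forms.
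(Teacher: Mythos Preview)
Your proof is correct and follows essentially the same route as the paper: both hinge on the right-invariance $R_*D=D$, use it to get $\partial_D\bar{\psi}=(\partial_D\psi)\comp R$, compute $(\bar{c}^*A)(D)=R^\sh\big((c^*A)(D)\big)$ via $\bar{c}^*A=R^*(c^*A)$ and $R_*D=D$, and then factor out $\comp R$ from the sum. The only difference is presentational: the paper inserts the intermediate step $R^\sh\big((c^*A)(R_*D)\big)$ explicitly before invoking $R_*D=D$, while you collapse this into one line.
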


\begin{proof}
Let $R$ be short for $R_{(t,\theta)}$. Then $R^\sh$ extends to
$\rpq$-valued functions. Moreover, the vector field $D$ is
invariant under right translations, i.e. $R_*D= D$, or, written
differently, $D\comp R^\sh= R^\sh\comp D$. Applied to the
$\rpq$-valued function $\psi$, this gives
\[ \partial_D(\psi\comp R)= (\partial_D\psi)\comp R. \]
On the other hand,
\begin{eqnarray*}
(\bar{c}^*A)(D) &=& (R^*(c^*A))(D)\\
&=& R^\sh(c^*A(R_*D))\\
&=& R^\sh(c^*A(D))\\
&=& (c^*A(D))\comp R.
\end{eqnarray*}
Therefore we have
\begin{eqnarray*}
\partial_D\bar{\psi}+(\bar{c}^*A)(D)\bar{\psi} &=&
\partial_D(\psi\comp R)+R^*(c^*A)(D)(\psi\comp R)\\
&=& \partial_D(\psi)\comp R+\{(c^*A)(D)\psi\}\comp R\\
&=& 0.
\end{eqnarray*}

\end{proof}

\noindent\textbf{Proof of (3). }Again, it is enough to assume that
$c$ maps to a trivializing neighborhood, as before. Then $\psi$ is
parallel along $c$ if
\[ \partial_D\psi + (c^*A)(D)\psi= 0. \]
Consider the section $\bar{\psi}$ along $\bar{c}$ defined by
$\bar{\psi}(s,u,\eta)=\psi(s,(u,\eta)^{-1}(t,\th))$. Then
$\bar{\psi}$ is $Q$-parallel along $\bar{c}$. To see this it is
enough to prove the following
\begin{lem}
Let $c:S\times\ruu\ra M$ be a superpath in $M$, and $A\in
\O^1(M)\tensor End(\rpq)^{ev}$. Let also $\psi:S\times\ruu\ra
\rpq$ be such that
\[ \partial_D\psi+(c^*A)(D)\psi= 0. \]
If $\bar{c}= c\comp R_{(t,\theta)}\comp I$ and $\bar{\psi}=
\psi\comp R_{(t,\theta)}\comp I$, where $I:S\times\ruu\ra
S\times\ruu: (s, u, \eta)\mapsto (s,-u,-\eta)$ is the inversion
map, then
\[  \partial_Q\bar{\psi}+(\bar{c}^*A)(Q)\bar{\psi}= 0. \]
\end{lem}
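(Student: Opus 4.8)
The plan is to mimic the proof of the previous lemma, but now carefully track how the inversion map $I$ transforms the vector field $D$ into $Q$. The key computation is the behavior of $D$ under the composite $R_{(t,\theta)}\comp I$. Since we already know from the previous lemma that right translation $R$ preserves $D$ (i.e. $R_*D=D$), the essential new ingredient is to compute $I_*D$. I expect that $I_*D = Q$ (up to the trivial extension to $S\times\ruu$), because $I$ is the group inversion and $D=\vd$ is right-invariant while $Q=\vq$ is left-invariant; inversion on a Lie group interchanges left- and right-invariant vector fields. Verifying $I_*D=Q$ explicitly in coordinates via $I^\sharp$ acting on $t$ and $\th$ is the concrete computation I would carry out first.

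Granting $I_*D=Q$ and $R_*D=D$, so that $(R\comp I)_*D = Q$, the rest of the proof follows the same two-part structure as the previous lemma. First I would handle the differential term: applying the chain rule to the $\rpq$-valued function $\psi$ under the composite $\bar{c}=c\comp R\comp I$, I get
\[ \partial_D(\psi\comp R\comp I)= \big((R\comp I)_*D\big)(\psi)\comp(R\comp I)= (\partial_Q\psi)\comp(R\comp I), \]
but since we want $\partial_Q$ acting on $\bar\psi=\psi\comp R\comp I$, the correct bookkeeping is that $\partial_Q\bar\psi = (\partial_D\psi)\comp (R\comp I)$, reflecting that $Q$ on the source pushes forward to $D$ on the target. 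Second, I would handle the endomorphism term exactly as before:
\begin{eqnarray*}
(\bar{c}^*A)(Q) &=& \big((R\comp I)^*(c^*A)\big)(Q)\\
&=& (R\comp I)^\sh\big(c^*A((R\comp I)_*Q)\big).
\end{eqnarray*}

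Here one must be careful: the relevant identity is $(R\comp I)_*Q = D$, which is the statement dual to $(R\comp I)_*D=Q$ and must be checked separately, since pushforward of vector fields is not simply inverted by swapping $D$ and $Q$. Assuming it holds, the term reduces to $(c^*A(D))\comp(R\comp I)$, and combining the two parts gives
\[ \partial_Q\bar\psi+(\bar{c}^*A)(Q)\bar\psi = \big\{\partial_D\psi+(c^*A)(D)\psi\big\}\comp(R\comp I)=0, \]
as desired. The main obstacle I anticipate is precisely this careful treatment of how $D$ and $Q$ interchange under the inversion $I$: one must verify both $I_*D=Q$ and $I_*Q=D$ (equivalently track signs arising from $\partial_t\mapsto-\partial_t$ and $\partial_\th\mapsto-\partial_\th$ together with the $\th$-dependent cross terms), and confirm that the right-translation step does not disturb this. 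Everything else is a routine transcription of the previous lemma's argument.
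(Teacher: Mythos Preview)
Your approach is essentially the paper's, but your key computation is off by a sign: one has $I_*D=-Q$ (and hence $I_*Q=-D$), not $I_*D=Q$. The Lie-group heuristic you invoke is correct only up to this sign: inversion interchanges left- and right-invariant vector fields \emph{and} negates them, since $d(\mathrm{inv})_e=-\mathrm{id}$. Concretely, with $I^\sharp(t)=-t$, $I^\sharp(\theta)=-\theta$, one checks $D\circ I^\sharp = I^\sharp\circ(-Q)$ on the generators $t,\theta$.

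This sign does not spoil the final conclusion, because it appears uniformly in both terms: the paper obtains
\[
\partial_Q\bar\psi = -\,I^\sharp R^\sharp\,\partial_D\psi,\qquad (\bar c^*A)(Q)= -\,I^\sharp R^\sharp\big((c^*A)(D)\big),
\]
so that
\[
\partial_Q\bar\psi+(\bar c^*A)(Q)\bar\psi \;=\; -\big\{\partial_D\psi+(c^*A)(D)\psi\big\}\circ R\circ I \;=\;0.
\]
Thus your outline is correct in structure and matches the paper, but the intermediate identities you wrote (without the minus sign) are false as stated; you should carry the sign through explicitly.
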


\begin{proof}
Let us begin by showing that, via the inversion map
$I:\ruu\ra\ruu:(t,\th)\mapsto (-t,-\th)$, we have
\[ I_*D= -Q. \]
For that, we need to show that the following diagram is
commutative

\[ \xymatrix{\cinf(\ruu) \ar[r] ^{I^\sharp} \ar@{-->}[d]_{-Q=-(\vd)} & \cinf(\ruu) \ar[d]^{D=\vq}\\
\cinf(\ruu) \ar[r] ^{I^\sharp} & \cinf(\ruu).} \] Following the
diagram both ways we have

\[ \xymatrix{t \ar@{|->}[r] ^{I^\sharp} \ar@{|->}[d]_{-Q} & -t \ar@{|->}[d]^D & &
\th \ar@{|->}[r] ^{I^\sharp} \ar@{|->}[d]_{-Q}
& -\th \ar@{|->}[d]^D  \\
\th \ar@{|->}[r] ^-{I^\sharp} & -\th & & -1 \ar@{|->}[r]
^-{I^\sharp} & -1 .} \]
\newline
\noindent Coming back to the proof of the lemma, let us notice
that $\bar{\psi}$ can be written
\[ \bar{\psi}= I^\sh R^\sh\psi, \]
where $R$ is short for $R_{(t,\theta)}$. Then
\begin{eqnarray*}
\partial_Q(\bar{\psi}) &=& \partial_Q(I^\sh R^\sh\psi)\\
&=& -I^\sh\partial_D R^\sh\psi\\
&=& -I^\sh R^\sh\partial_D\psi,
\end{eqnarray*}
where the second equality is true by $\partial_Q I^\sh= -I^\sh
\partial_D$ above and the third equality is true since $D$ is a
right-invariant vector field, i.e. $R^\sh \partial_D= \partial_D
R^\sh$.\\

\noindent On the other side, we have
\begin{eqnarray*}
(\bar{c}^*A)(Q) &=& (I^*R^*c^*A)(Q)\\
&=& I^\sh(R^*(c^*A)(I_*(Q))) \\
&=& -I^\sh(R^*(c^*A)(D))  \\
&=& -I^\sh R^\sh((c^*A)(D)),
\end{eqnarray*}
where we used that the fact that
\[ (f^*\o)(Y)= f^\sharp(\o(f_*Y)), \]
with $f:N\ra M$ an arbitrary map of supermanifolds, $\o\in
\O^1(M)$, and $Y\in\sX(N)$. (The relation is true provided $f_*Y$
exists, which is true in our cases.) Therefore
\begin{eqnarray*}
\partial_Q\bar{\psi}+(\bar{c}^*A)(Q)\bar{\psi} &=& -I^\sh R^\sh\partial_D\psi-
I^\sh R^\sh((c^*A)(D))(I^\sh R^\sh\psi)\\
&=& -(\partial_D\psi+(c^*A)(D)\psi)\comp R\comp I                  \\
&=& 0.
\end{eqnarray*}
The lemma is proved.

\end{proof}

\noindent The conclusion of (3) follows.\\

\subsection{The parallel transport of $(\nabla, A)$}\label{adiabatic}
In the end of this section we define a notion of
\emph{$A$-parallel transport} for the pair consisting of a
connection and a bundle endomorphism $A$, and see that it
converges (by an ``inverse adiabatic limit" process) to the
parallel transport of the connection. In particular, this means
that the $A$-parallel transport is reparametrization invariant in
the limit.

Let $E$ be a super vector bundle over a supermanifold $M$. Let
$(\na, A)$ be a pair consisting of a (grading preserving)
connection $\na$ on $E$ and $A\in\G(M, End\ E)$ an \emph{odd}
endomorphism of $E$. Let $c:S\times \ruu\ra M$ in $M$ be a family
of supercurves parametrized by a supermanifold $S$. A section
$\psi\in\G(c^*E)$ of $E$ along $c$ is \emph{$A$-parallel} if it
satisfies the equation
\begin{equation} \label{par}
(c^*\na)_{D}\psi-(c^*A)\psi= 0.
\end{equation}

This is again a ``half-order" differential equation. In local
coordinates, if $E_{|U}\iso U\times\rpq$, then $\na= d+a$, with
$a\in\O^1(M, End E)^{odd}$ and the equation (\ref{par}) can be
written
\[ \partial_D\psi+ (c^*a)(D)\psi-(c^* A)\psi= 0, \] where $D=\partial_\eta+\eta\partial_u$.
Suppose for simplicity that $(u, \eta)$ runs on the superinterval
$I_{(T,\tau)}$, for $(T,\tau)\in \ruu_+(S)$ an $S$-superpoint of
$\ruu_+$. Recall that $I_{(T,\tau)}$ is defined by the embeddings
\[ \xymatrix{ S \ar@{^{(}->}[r]_-{i_{(0,0)}} & S\times\ruu & S
\ar@{_{(}->}[l]^-{i_{(T,\tau)}}}. \]

For $\l>0$, let
$$\f_\l:I_{(\l
T,\sqrt{\l}\tau)}\ra  I_{(T,\tau)}:\ (t,\th)\mapsto
(\frac{1}{\l}t, \frac{1}{\sqrt{\l}}\th) $$ be the ``rescaling"
diffeomorphism that preserves the distribution $\sD$. Then
$\tilde{\psi}$ is $A$-parallel with respect to $\ti{c}=
c\comp\f_\l$ if
\[ \partial_{\ti{D}}\ti{\psi}+ (c^*a)(\ti{D})\ti{\psi} -(\ti{c}^* A)\ti{\psi} = 0, \]
where $\ti{D}=\partial_\th+\th
\partial_t$. If we write $\ti{\psi}= \psi^\l\comp\f_\l= \f_\l^\sh(\psi^\l)$ then the
last equation can be rewritten
\[ \partial_{\ti{D}}(\f_\l^\sh(\psi^\l))+
\f_\l^*(c^*a)(\ti{D})\f_\l^\sh(\psi^\l)- \f_\l^\sh (c^*
A)\f_\l^\sh(\psi^\l)= 0,
\]
An easy calculation shows that
$\f_{\l*}(\ti{D})=\frac{1}{\sqrt{\l}}D$ 
which can be rewritten as $\partial_{\ti{D}}\f_\l^\sh=
\frac{1}{\sqrt{\l}}\f_\l^\sh\partial_D$, and the last equation is
equivalent to
\[ \frac{1}{\sqrt{\l}}\f_\l^\sh\partial_D\psi^\l+
\frac{1}{\sqrt{\l}}\f_\l^\sh((c^*a)(D))\f_\l^\sh(\psi^\l)-\f^\sh_\l(
(c^* A)\psi^\l)= 0, \] therefore
\[ \partial_D\psi^\l+ (c^*a)(D)\psi^\l -\sqrt{\l}(c^* A)\psi^\l = 0. \]

If we let $\l\to 0$ we see that $\psi^\l\longrightarrow \psi^0$,
where $\psi^0$ is the parallel section along $c$ determined by the
connection $\na$. We conclude that the parallel transport defined
by $(\na, A)$ converges in the ``inverse adiabatic limit" to the
parallel transport of $\na$, which is in particular invariant
under reparametrization. Symbolically we write
\[ SP(\na, A) \longrightarrow SP(\na). \]

\section{Superconnections and Parallel Transport} In this
section we prove our main result: Any superconnection $\A$ on a
$\bZ/2$-graded vector bundle over a manifold gives rise to a
parallel transport $SP(\A)$ which converges to the parallel
transport $SP(\A_1)$ determined by $\A_1$, the connection part of
the superconnection.

\subsection{Preliminaries} \label{prel} Start with a $\bZ/2$-graded vector
bundle $E$ over a \emph{manifold} $M$, and consider a
grading-preserving connection $\na$ on $E$, together with an $End\
E$- valued form $A$ on $M,\ A\in (\O^*(M, End\ E))^{odd}$.
Combining these two pieces, we obtain a Quillen connection $\A=
\na+ A$ on $E$.

Recall the identification in Section \ref{ident}.
\[ \underline{\sman}(\ruu, M) =\ \underline{\sman}(\bR, \Pi TM),
\] which for a supermanifold $S$ gives
\[ \sman(S\times\ruu, M)\cong\ \sman(S\times\bR, \Pi TM). \]

Let $c:\ S\times\ruu \ra M$ be a supercurve in $M$. Lift it to a
supercurve $\tilde{c}$ in $\Pi TM$  as follows
\[ \xymatrix{ \Pi TM\times \rou \ar[r]^-T & \Pi TM  \ar[d]^\pi \\
S\times \ruu \ar[r]_-c \ar[u]^{\hat{c}\times 1}
\ar@{-->}[ur]^{\tilde{c}} & M. } \] In other words, $\ti{c}=
T\comp (\hat{c}\times 1)$, where $T$ is the $\rou$ action map on
$\Pi TM$ (see Lemma \ref{equiv}). The map $\hat{c}:S\times\bR\ra
\Pi TM$ corresponds to $c:S\times\ruu\ra M$ under the above
identification. The map $\pi$ is given on functions by
$\pi^\sharp: \cinf(M) \ra \cinf(\Pi TM)=\ \O(M)$ the inclusion of
functions on $M$ into the space of differential
forms on $M$. \\\\
\emph{Claim: The above diagram is commutative}.\\\\
\emph{Proof of Claim:} It is enough to show that the following
diagram is commutative
\begin{equation} \label{dia}
\xymatrix{  \Pi TM\times \rou \ar[r]^-T & \Pi TM  \ar[d]^\pi \\
S\times \rou \ar[r]_-\a \ar[u]^{\hat{\a}\times 1} & M, }
\end{equation}
for $S$ an arbitrary supermanifold, and $\a:S\times\rou\ra M$ an
arbitrary map. Here $\hat{\a}:S\ra\Pi TM$ corresponds to $\a$ via
Lemma \ref{otb}.

This translates into the following diagram being commutative
\[ \xymatrix{ \O^*(M)[\th] \ar[d]_{\hat{\a}^\sh\tensor 1} & \O^*(M)
\ar[l]_-{T^\sh} \\
\cinf(S)[\th] & \cinf(M)\ar[l]^{\a^\sh} \ar[u]_{\pi^\sh}. } \]

\noindent Recall (see the proof of Lemma \ref{otb}) that if
$\a^\sh: f\mapsto a^\sh(f)+\th X_a(f)$, for $a\in M(S)$ and
$X_a\in TM_a$, then $\hat{\a}^\sh: \O^*(M)\ra\cinf(S)[\th]$ is
determined by saying that $f\mapsto a^\sh(f),\ df\mapsto X_a(f)$.
Therefore, we have
\[ f\over{\pi^\sh}\longmapsto f \over{T^\sh} \longmapsto f+\th df
\over{\hat{\a}^\sh\tensor 1} \longmapsto a^\sh(f)+\th X_a(f)=
\a^\sh(f).\] To complete the proof of the claim, it is enough to
replace $S\mapsto S\times\bR$ and $\a\mapsto c$ in the above
considerations.

\begin{rem} \label{nat} It is not hard to check that the construction
$c\mapsto \ti{c}$ is natural in $S$, i.e. $$\widetilde{c\comp
(\f\times 1)}= \ti{c}\comp(\f\times 1),$$ for $\f:S'\ra S$ an
arbitrary map of supermanifolds.
\end{rem}

Given the supercurve $c$, consider the pull-back diagram
\[ \xymatrix{ E \ar[dd] & & c^*E \ar[ll] \ar[dl] \ar[dd] \\
& \pi^*E \ar[ul] \ar[dd] & \\
M & & S\times\ruu \ar'[l][ll]_{\ \ \ \ c} \ar@{-->}[dl]^-{\tilde{c}} \\
& \Pi TM \ar[ul]^\pi &  } \] where $\tilde{c}$ is as above. We
call a section $\psi\in\G(c^*E)$ of $E$ along $c$
\emph{$\A$-parallel} if it satisfies the equation
\[  (c^*\na)_{D}\psi- (\tilde{c}^*A)\psi= 0.  \]
This is again a ``half-order" differential equation. It is
equivalent to the equation
\[  (\ti{c}^*(\pi^*\na))_{D}\psi- (\tilde{c}^*A)\psi= 0.  \]

Therefore $\psi\in \G(c^*E)$ is $\A$-parallel if and only if
$\psi$ is $A$-parallel along the \emph{lift} $\ti{c}$ with respect
to the pair $(\pi^*\na, A\in \G(End(\pi^*E)))$ on the bundle
$\pi^*E\ra \Pi TM$, as defined in Section \ref{adiabatic}.
Therefore Prop. \ref{lift} gives the following

\begin{prop}
Let $c:S\times\ruu \ra M$ be a supercurve in the compact manifold
$M$. Let $\psi_0\in\G(c_{0,0}^*E)$ be a section of $E$ along
$c_{0,0}:S\ra M$. Then, there exists a unique $\A$-parallel
section $\psi$ of $E$ along $c$, such that $\psi(0,0)= \psi_0$.
\end{prop}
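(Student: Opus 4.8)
The plan is to recognize the statement as a direct application of Prop.~\ref{lift} (in the $A$-parallel form of Section~\ref{adiabatic}) to the \emph{lift} $\tic$ rather than to $c$ itself. The computation preceding the statement shows that $\psi\in\G(c^*E)$ satisfies $(c^*\na)_D\psi-(\tic^*A)\psi=0$ if and only if it satisfies $(\tic^*(\pi^*\na))_D\psi-(\tic^*A)\psi=0$; since $\pi\comp\tic=c$ this is an honest identity of differential operators, not merely of solution sets. The right-hand equation is exactly the defining equation for a section of $\pi^*E$ that is $A$-parallel along the supercurve $\tic\colon S\times\ruu\ra\Pi TM$ with respect to the pair $(\pi^*\na,\ A)$, where $A$ is read as a section of $End(\pi^*E)$ as in Section~\ref{prel}. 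Hence it suffices to produce a unique such section with prescribed value at $(0,0)$.

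First I would verify the hypotheses of the lifting result in this new setting. The relevant target supermanifold is now $\Pi TM$, whose reduced manifold is $M$; since $M$ is compact by assumption, the compactness needed in Prop.~\ref{lift} to extend a local solution globally along the flow of $D$ is available. Local existence and uniqueness are, just as in the proof of Prop.~\ref{lift}, purely local: in a trivialization $\pi^*E\iso U\times\rpq$ one writes $\pi^*\na=d+a$ and expands $\psi=G+\th H$, and the $A$-parallel equation becomes a ``half-order'' system whose algebraic part expresses $H$ in terms of $G$ and whose differential part is a first-order ODE for $G$. The only change from Prop.~\ref{lift} is the additional zeroth-order term $-(\tic^*A)\psi$, whose coefficient is a smooth endomorphism-valued function; this perturbs the ODE harmlessly and does not affect existence or uniqueness.

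Next I would match the initial data. From $\pi\comp\tic=c$ we get $\pi\comp\tic_{0,0}=c_{0,0}$, hence $\tic_{0,0}^*(\pi^*E)=c_{0,0}^*E$, so the prescribed $\psi_0\in\G(c_{0,0}^*E)$ is precisely an admissible initial condition for the problem along $\tic$. Applying the existence-and-uniqueness statement to $\tic$, $\pi^*\na$, $A$ and $\psi_0$ then yields a unique $A$-parallel $\psi$ along $\tic$ with $\psi(0,0)=\psi_0$, and by the equivalence above this is the unique $\A$-parallel section along $c$ with the required value. The one genuine point requiring care---and the place a sign or base-point mismatch could slip in---is the reduction itself: one must be certain that ``$\A$-parallel along $c$'' and ``$A$-parallel along $\tic$'' name the \emph{same} equation (so that uniqueness, and not just existence, transfers), and that the identification $c_{0,0}^*E=\tic_{0,0}^*\pi^*E$ is the one under which $\psi_0$ serves as initial datum; both follow from the commutativity $\pi\comp\tic=c$ established in Section~\ref{prel}.
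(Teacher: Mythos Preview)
Your proposal is correct and follows exactly the route the paper takes: reduce ``$\A$-parallel along $c$'' to ``$A$-parallel along $\tic$ with respect to $(\pi^*\na,A)$ on $\pi^*E\to\Pi TM$'', then invoke Prop.~\ref{lift}. You are in fact more careful than the paper, which simply writes ``Therefore Prop.~\ref{lift} gives the following'' without remarking that Prop.~\ref{lift} as stated treats the equation $(c^*\na)_D\psi=0$ rather than $(c^*\na)_D\psi-(c^*A)\psi=0$; your observation that the extra zeroth-order term only perturbs the resulting first-order ODE and leaves local existence and uniqueness intact is precisely the missing sentence.
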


\subsection{Parallel transport along superpaths} Let $c:I_{t,\th}
\ra M$ be a superpath in $M$ with $c(0,0)=x$ and $c(t,\th)=y$.
Then the $\A$-parallel transport of $\na$ and $A\in\O^*(M, End E)$
will determine a bundle homomorphism $SP(c):x^*E\ra y^*E$. This is
defined as in Section \ref{spath} by a $\cinf(S)$-linear map
$SP(c): \G(S, x^*E)\ra \G(S, y^*E)$ described by the diagram
\[ \xymatrix{ & E \ar[d] & & \\
& M & & \\
S \ar[ur]_x \ar[uur]^v \ar@{^{(}->}[rr]_-{i_{(0,0)}} & &
S\times\ruu \ar[ul]^c \ar@{-->}[uul]_\psi & & S
\ar@{_{(}->}[ll]^-{i_{(t,\th)}} \ar[ulll]_y
\ar@/_1pc/[uulll]_{\psi(t,\th)}, }  \] i.e. $SP(c)(v)= \psi(t,
\th)$, where $\psi$ is the unique $\A$-parallel section with
respect to the pair $(\na, A)$ of $E$ along the supercurve $c$,
such that $\psi\comp i_{(0,0)}= v$.

\subsection{Main Theorem} We are now in the position to state our
main theorem.

\begin{thm}
Let $E$ be a $\bZ/2$-graded vector bundle over a manifold $M$. Let
$\na$ be a grading preserving connection on $E$ and $A\in\O^*(M,
End E)^{odd}$. The pair $(\na, A)$ gives rise to a correspondence
$SP= SP(\na, A)$
\[ \xymatrix{ I_{t,\th} \ar[r]^c & M & \ar@{|->}[rr]^-{SP} & & &
c_{0,0}^*E \ar[r] & c_{t,\th}^*E} \] such that:
\begin{enumerate}
\item The correspondence $c\mapsto SP(c)$ is smooth, and natural
in $S$ (see Lemma \ref{naturality}). \item (Compatibility under
glueing) If $c:I_{t,\th}\ra M$ and $c':I_{t',\th'}\ra M$ are two
superpaths in $M$ such that $c'\equiv c\comp R_{t,\th}$ on some
neighborhood $S\times U$ of $S\times (0,0)\hookrightarrow
S\times\ruu$, with $U$ an open subsupermanifold in $\ruu$
containing $(0,0)$, we have
\[ SP(c'\cdot c)= SP(c')\comp SP(c), \]
where $c'\cdot c:I_{t'+t+\th'\th, \th'+\th}\ra M$ is obtained from
$c$ and $c'$ by glueing them along their ``common endpoint".
\end{enumerate}
Moreover, if $\na\neq\na'$ or $A\neq A'$ then $SP(\na, A)\neq
SP(\na', A')$. Also, $SP(\na, A)$ converges in the inverse
adiabatic limit to $SP(\na)$.

\begin{proof} The properties (1) and (2) are clear from the
construction of the parallel transport of the pair $(\na,A)$. Two
different such pairs (superconnections) give rise to two different
parallel transports, since the parallel transport recovers the
superconnection, as we will show in the following Section
\ref{recover}. The inverse adiabatic limit process is described in
Section \ref{adiabatic}.
\end{proof}

\end{thm}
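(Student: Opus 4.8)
The plan is to reduce every assertion to the theory of $A$-parallel transport developed in Section \ref{adiabatic}, transported to $\Pi TM$ by the lift of Section \ref{prel}. Recall from there that $\psi\in\G(c^*E)$ is $\A$-parallel along $c$ if and only if it is $A$-parallel along the lifted supercurve $\ti{c}=T\comp(\hat{c}\times 1)$ with respect to the pair $(\pi^*\na, A)$ on $\pi^*E\ra\Pi TM$; existence and uniqueness of such a $\psi$ with prescribed value at $(0,0)$ is the Proposition ending Section \ref{prel}, so $SP(c)\colon c_{0,0}^*E\ra c_{t,\th}^*E$ is a well-defined $\cinf(S)$-linear bundle map, independent of the chosen representative of the superpath by the locality of the half-order equation (as in Section \ref{spath}). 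With this in hand, $SP(\na,A)$ is exactly the $A$-parallel transport of $(\pi^*\na,A)$ read along lifted superpaths, and I would inherit each property from its counterpart over $\Pi TM$ together with the good behaviour of the lift.

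For property (1), smoothness is the smooth dependence on parameters of the first-order system to which the half-order equation reduces (compare Prop. \ref{lift}). Naturality in $S$ I would obtain by combining the naturality of the lift, $\widetilde{c\comp(\f\times 1)}=\ti{c}\comp(\f\times 1)$ (Remark \ref{nat}), with the naturality of $A$-parallel transport proved as in Lemma \ref{naturality} (whose argument uses only $\bar{\f}_*D=D$ and is insensitive to the tensorial term $\ti{c}^*A$). For property (2), I would first record the $A$-analogue of the translation lemma from the proof of part (2) in Section \ref{spath}: if $\psi$ solves $\partial_D\psi+(c^*a)(D)\psi-(\ti{c}^*A)\psi=0$, then $\psi\comp R_{t,\th}$ solves the same equation for $c\comp R_{t,\th}$. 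Its proof is identical, using the right-invariance $R^\sh\partial_D=\partial_D R^\sh$ and the pullback identity $(\overline{c}^*a)(D)=(c^*a)(D)\comp R_{t,\th}$; the only new ingredient is the compatibility of the lift with right translation, $\widetilde{c\comp R_{t,\th}}=\ti{c}\comp R_{t,\th}$, which gives $(\widetilde{c\comp R_{t,\th}})^*A=(\ti{c}^*A)\comp R_{t,\th}$ and which follows from the functorial construction of the lift through the $\rou$-action $T$ and the equivariance of Lemma \ref{equiv}. Since the lift is local in the $\ruu$-variable and $c'\cdot c$ agrees with $c$ near $(0,0)$ and with $c'\comp R_{t,\th}^{-1}$ near the far endpoint, the glued section is parallel on each piece, and I would conclude $SP(c'\cdot c)=SP(c')\comp SP(c)$.

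The convergence $SP(\na,A)\longrightarrow SP(\na)$ is the inverse adiabatic limit of Section \ref{adiabatic} applied to $(\pi^*\na,A)$: evaluating the correspondence on the rescaled superpaths $c\comp\f_\l$ and letting $\l\to 0$ drives the coefficient of $A$ to zero, so the solutions converge to the $\na$-parallel ones. For injectivity I want to emphasize a point that dictates the whole strategy: one cannot recover $(\na,A)$ by splitting the recovered operator into a derivation and a tensor, because the operator $L=(c^*\na)_D-\ti{c}^*A$ governing the transport is \emph{itself} a $D$-derivation. Indeed $D$ and $A$ are both odd and $\ti{c}^*A$ is $\cinf$-linear, so $L(f\psi)=D(f)\psi+(-1)^{p(f)}fL\psi$ and the odd endomorphism does not spoil the Leibniz rule along $D$. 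This is exactly why the adiabatic limit is essential. I would therefore first recover $\na$ by passing the correspondence $SP(\na,A)$ to its inverse adiabatic limit $SP(\na)$ and then invoking Section \ref{recover}; with $\na$ in hand, the operator $L$ recovered from the transport (from a local frame of $\A$-parallel sections, exactly as in Section \ref{recover}) yields $\ti{c}^*A=(c^*\na)_D-L$, and letting $c$ vary so that the lifts $\ti{c}$ probe all odd directions of $\Pi TM$ recovers $A\in\O^*(M,End\ E)^{odd}$ in every form degree. I expect this last step, together with the right-translation compatibility of the lift used in (2), to be the main obstacle; both should be resolved by unwinding the $\rou$-equivariance of Lemma \ref{equiv} and the functor-of-points description of the lift.
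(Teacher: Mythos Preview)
Your proposal is correct and follows the same architecture as the paper: reduce everything to the $A$-parallel transport of the pair $(\pi^*\na,A)$ on $\Pi TM$ via the lift $c\mapsto\ti c$, read off (1) and (2) from the corresponding properties there (together with naturality and right-translation compatibility of the lift), invoke Section \ref{adiabatic} for the inverse adiabatic limit, and recover $(\na,A)$ by first extracting $\na$ from the limit and then isolating the tensorial part. Your observation that $L=(c^*\na)_D-\ti c^*A$ is itself a $D$-derivation, so that one cannot split it na\"ively and must pass through the adiabatic limit to get $\na$ first, is exactly the point.

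The one place where the paper proceeds slightly differently is the last step of the recovery. Rather than ``letting $c$ vary so that the lifts $\ti c$ probe all odd directions of $\Pi TM$'', the paper chooses a single universal family: take $S=\Pi TM$ and $c=ev\comp\rho:\Pi TM\times\ruu\ra M$, whose lift is computed once and for all to be $\ti c=T\comp\rho$. Then the transport determines $\rho^*T^*A$, and injectivity of $\rho^*$ and $T^*$ (both pullbacks along surjective submersions with sections) yields $A$ directly. This packages your ``vary $c$'' argument into one stroke and avoids having to argue that sufficiently many lifts separate sections of $End\,\pi^*E$; conversely, your formulation makes it clearer why the method works in principle. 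Either route closes the proof.
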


\begin{cor} \textbf{(The parallel transport of a superconnection)}
A superconnection $\A$ on the bundle $E$ over $M$ (in the sense of
Quillen) gives rise in a unique way to a (super) parallel
transport based on $M$, namely consider the parallel transport
$SP(\na, A)$ associated to the pair $(\na=\A_1, A=\sum_{i\neq
1}\A_i)$.
\end{cor}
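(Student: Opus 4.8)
The plan is to recognize this corollary as essentially a repackaging of the Main Theorem, the only genuine work being to confirm that the pair $(\na, A)$ extracted from $\A$ satisfies the hypotheses stated there. First I would recall the canonical decomposition of a Quillen superconnection. Decomposing $\A$ according to the form-degree it shifts, $\A = \A_0 + \A_1 + \A_2 + \cdots$, the graded Leibniz rule forces $\A_1 = \na$ to be a grading-preserving connection on $E$, while each $\A_i$ with $i\neq 1$ is $\O^*(M)$-linear, hence given by multiplication by a form $\o_i \in \O^i(M, End\ E)$. I would then set $\na := \A_1$ and $A := \sum_{i\neq 1}\A_i = \sum_{i\neq 1}\o_i$, so that $\A = \na + A$ is the advertised splitting.

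The step I expect to require real care --- and the one place something could go wrong --- is the parity bookkeeping needed to check that $A \in \O^*(M, End\ E)^{odd}$, where \emph{odd} refers to the \emph{total} $\bZ/2$-grading on $\O^*(M, End\ E)$ (form degree plus internal $End\ E$-degree). Since $\A$ is an odd operator on $\O^*(M, E)$ and the decomposition by form-degree shift is compatible with this total grading, each component $\A_i$ must itself reverse total parity. For multiplication by $\o_i$ this means the total parity $i + p(\o_i)$ of $\o_i$ is odd, so that $\o_i$ is internally even exactly when $i$ is odd and internally odd exactly when $i$ is even. In every case $\o_i$ is odd in the total grading, and hence $A = \sum_{i\neq 1}\o_i$ is a sum of odd elements, i.e. $A\in \O^*(M, End\ E)^{odd}$, as required. (As a sanity check, the $i=0$ piece $\A_0=\o_0$ is an internally odd endomorphism, matching the $A\in\G(M, End\ E)^{odd}$ convention used earlier.)

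With $(\na, A)$ now of exactly the type handled by the Main Theorem, I would simply invoke that theorem to produce the parallel transport $SP(\na, A)$: smooth, natural in $S$, compatible under gluing of superpaths, and converging in the inverse adiabatic limit to $SP(\na) = SP(\A_1)$. Setting $SP(\A) := SP(\na, A)$ then furnishes the asserted super parallel transport based on $M$.

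For the uniqueness clause I would observe that the assignment $\A \mapsto (\na, A)$ is a bijection between superconnections on $E$ and pairs consisting of a grading-preserving connection and an odd element of $\O^*(M, End\ E)$: the components $\A_i$ are uniquely determined by $\A$ as its homogeneous form-degree pieces, and conversely any such pair reassembles into an odd Leibniz operator. Hence $SP(\A)$ is well defined, depending only on $\A$. Injectivity of $\A \mapsto SP(\A)$ then follows immediately from the concluding statement of the Main Theorem, that $SP(\na, A)\neq SP(\na', A')$ whenever $\na\neq\na'$ or $A\neq A'$: distinct superconnections yield distinct pairs $(\na, A)$, and therefore distinct parallel transports.
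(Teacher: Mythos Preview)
Your proposal is correct and matches the paper's approach: the corollary is stated without a separate proof, being regarded as immediate from the Main Theorem once one uses the standard decomposition $\A=\A_1+A$ recalled in the Introduction. Your added verification that $A\in\O^*(M, End\ E)^{odd}$ and your unpacking of the uniqueness clause are reasonable elaborations of what the paper leaves implicit.
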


\subsection{Recovering the superconnection} \label{recover} In this
section we show how to recover a superconnection- i.e.  a pair
$(\na, A)$ - from the parallel transport associated to it. We have
already seen in Section \ref{adiabatic} how the parallel transport
of $(\na, A)$ converges via an inverse adiabatic limit process to
the parallel transport of $\na$, which further recovers the
connection $\na$ - see Section \ref{recover}. We are only left
with obtaining $A\in \O^*(M, End\ E)$. To do that, let us consider
the following diagram

\[ \xymatrix{ M & & \Pi TM\times\rou \ar[ll]_{ev} \ar[dl]_T & & \Pi
TM\times\ruu \ar[ll]_\rho \ar@{-->}[dlll]^{\ti{c}} \ar@/_2pc/[llll]_c \\
& \Pi TM \ar[ul]^\pi & & & } \] where $ev$ is the ``evaluation"
map as in the previous section and $\rho= 1_{\Pi TM} \times p$,
with $p:\ruu\ra \rou$ the natural projection map. Let us remark
first that the lift of the curve $c= ev\comp \rho$ is the
composition $T\comp\rho$. This holds by the naturality of lifts of
supercurves  (see Remark \ref{nat}), and the fact that the lift of
the ``curve" $ev$ is given by $T$ (in diagram \ref{dia}, if $\a=
ev$ then $\widehat{ev}= 1_{\Pi TM})$.

By definition, a section $\psi\in \G(c^*E)$ of $E$ along $c$ is
$\A$-parallel if
\[ (c^*\na)_D\psi- (\rho^*T^*A)\psi= 0. \]
We therefore know the operator
\[ (c^*\na)_D- \rho^*T^*A:\  \G(c^*E)\lra \G(c^*E) \]
on parallel sections. But that is enough to determine it, since
the parallel sections generate $\G(c^*E)$ as a $\cinf(\Pi TM\times
\ruu$)- module. On the other hand, we know the operator
$(c^*\na)_D: \G(c^*E)\lra \G(c^*E)$, since we know the connection
$\na$. In this manner we determine the linear map $\rho^*T^*A$.
Since both $\rho^*$ and $T^*$ are injective, this uniquely
determines $A$. In this manner, we recovered the superconnection
$(\na, A)$ from the associated parallel transport.

\subsection{An example. } Let us conclude by considering the above construction
in the case of $M=\ pt$. The bundle $E$ together with the
connection reduces in this case to a $\bZ/2$-graded vector space
$V$, and the bundle endomorphism valued form $A$ reduces to an odd
endomorphism $A\in End^1(V)$. We have the pull-back diagram
\[ \xymatrix{ E=E \ar[d] & E\times \ruu \ar[d] \ar[l] \\
pt=\ pt & \ruu \ar[l]^-{c=\tilde{c}}. } \] (It's enough to
consider only this map, since the factor $S$ doesn't play a role
here.) The  pull-back bundle is endowed with the trivial
connection. The super parallel sections along $c$ are therefore
given by the equation
\[ D\psi= A\psi. \]
\begin{lem} The solutions of the above equation  are given by
$$(t,\th)\mapsto e^{-tA^2+\th A}v,$$ for some $v$ in $V$.
\end{lem}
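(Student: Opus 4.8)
The plan is to turn the half-order equation $D\psi = A\psi$ into an ordinary linear ODE by expanding $\psi$ in the single odd coordinate. Since a $V$-valued function on $\ruu$ is a polynomial of degree at most one in $\th$, I would write $\psi(t,\th) = a(t) + \th\, b(t)$ with $a,b$ smooth $V$-valued functions of the even variable $t$, and then compute the two sides of $D\psi = A\psi$ separately.

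For the left-hand side, using $D = \vd$ together with $\th^2 = 0$ and $\partial_\th(\th b) = b$, one finds $D\psi = \partial_\th\psi + \th\,\partial_t\psi = b + \th\,\dot a$. For the right-hand side the only point requiring care is commuting the odd endomorphism $A$ past the odd coordinate $\th$: the Koszul sign rule gives $A(\th b) = -\th\,(Ab)$, so that $A\psi = Aa - \th\,Ab$. Matching the $\th$-free parts and the coefficients of $\th$ then yields the coupled system $b = Aa$ and $\dot a = -Ab$.

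Substituting the first relation into the second collapses the system to the genuine linear ODE $\dot a = -A^2 a$ on $V$, whose unique solution with prescribed value $a(0) = v$ is $a(t) = e^{-tA^2}v$; correspondingly $b = Aa = e^{-tA^2}Av$, since $A$ commutes with $e^{-tA^2}$. Reassembling gives $\psi(t,\th) = e^{-tA^2}(1+\th A)v$ with $v = \psi(0,0)$ arbitrary, so every solution is of this shape and, conversely, each such expression solves the equation.

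It then remains to recognize $e^{-tA^2}(1+\th A)$ as the single exponential $e^{-tA^2+\th A}$. Here I would note that $(\th A)^2 = \th A\,\th A = -\th^2 A^2 = 0$, so $e^{\th A} = 1 + \th A$, and that the even elements $-tA^2$ and $\th A$ commute, whence $e^{-tA^2+\th A} = e^{-tA^2}e^{\th A} = e^{-tA^2}(1+\th A)$. The main thing to keep straight throughout is the consistent application of the sign $A\th = -\th A$; beyond that bookkeeping the argument is just the reduction to a matrix exponential, and one could alternatively invoke Proposition \ref{lift} for uniqueness once existence is checked by direct substitution.
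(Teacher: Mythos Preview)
Your argument is correct. By expanding $\psi=a(t)+\th\,b(t)$ and matching coefficients you obtain the system $b=Aa$, $\dot a=-Ab$, reduce to $\dot a=-A^2a$, and read off the general solution $e^{-tA^2}(1+\th A)v=e^{-tA^2+\th A}v$. The sign bookkeeping ($A\th=-\th A$, $\th^2=0$, and the commutation of $-tA^2$ with $\th A$) is handled correctly.

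The paper takes a different, shorter route: it simply \emph{verifies} by direct differentiation that $(t,\th)\mapsto e^{-tA^2+\th A}v$ satisfies $D\psi=A\psi$, writing $e^{-tA^2+\th A}=(1+\th A)e^{-tA^2}$ and applying $\partial_\th+\th\partial_t$. Uniqueness is not argued in the proof itself; it is tacit from Proposition~\ref{lift}. Your approach, by contrast, \emph{derives} the solution and thereby establishes in one stroke that every solution has the stated form, making the lemma's ``the solutions are given by'' genuinely two-sided without appealing to the earlier proposition. The paper's proof is quicker if one already has the candidate in hand; yours explains where the formula comes from.
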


\begin{proof} Indeed, we have:
\begin{eqnarray*}
(\partial_\th+ \th\partial_t)e^{-tA^2+\th A} &=& (\partial_\th+
\th\partial_t)[(1+\th A)e^{-tA^2}] \\
&=& Ae^{-tA^2}+ \th e^{-tA^2}(-A^2)\\
&=& A(1+ \th A)e^{-tA^2} \\
&=& Ae^{-tA^2+\th A}
\end{eqnarray*}
where in the third equality we moved $A$ past $e^{-tA^2}$ without
a sign change since $e^{-tA^2}$ is even, and past $\th$ with a
change of sign, since both $A$ and $\th$ are odd. The lemma
follows.
\end{proof}

The parallel transport therefore defines a map
\[ \ruu \ni(t, \th)\mapsto e^{-tA^2+\th A}\in GL(V), \]
which is in fact a \emph{supergroup homomorphism} $\ruu\ra GL(V)$,
since composition on $\ruu$, which preserves the vector field $D$,
corresponds to composition (multiplication) on $GL(V)$. For a
direct proof of this see \cite{ST}.

\bibliographystyle{plain}
\bibliography{bibliografie}

\end{document}